\providecommand{\U}[1]{\protect \rule{.1in}{.1in}}
\newtheorem{theorem}{Theorem}[section]
\newtheorem{lemma}[theorem]{Lemma}
\newtheorem{remark}[theorem]{Remark}
\newenvironment{proof}[1][Proof]{\noindent \textbf{#1.} }{\  \rule{0.5em}{0.5em}}
\begin{document}

\title{A type of globally solvable BSDEs with triangularly quadratic generators}
\author{Peng Luo \thanks{Department of Statistics and Actuarial Science, University of Waterloo, Waterloo, ON,
N2L 3G1, Canada (p23luo@uwaterloo.ca)}}

\maketitle

\begin{abstract}
The present paper is devoted to the study of the well-posedness of a type of BSDEs with triangularly quadratic generators. This work is motivated by the recent results obtained by Hu and Tang \cite{HT} and Xing and \v{Z}itkovi\'{c} \cite{XZ}. By the contraction mapping argument, we first prove that this type of triangularly quadratic BSDEs admits a unique local solution on a small time interval whenever the terminal value is bounded. Under additional assumptions, we build the global solution on the whole time interval by stitching local solutions. Finally, we give solvability results when the generators have path dependence in value process.
\end{abstract}

\textbf{Key words}:  BSDEs, Triangularly quadratic generators, BMO martingales, Path dependence.

\textbf{MSC-classification}: 60H10, 60H30.
\section{Introduction}
Backward stochastic differential equations (BSDEs) are introduced in Bismut \cite{B}.
A BSDE is an equation of the form
\begin{equation*}
Y_t=\xi+\int_{t}^{T}g(s,Y_s,Z_s)ds-\int_{t}^{T}Z_sdW_s, \quad t\in[0,T],
\end{equation*}
where $W$ is a $d$-dimensional Brownian motion,  the terminal condition $\xi$ is an $n$-dimensional random variable, and $g:\Omega\times [0,T]\times \mathbb{R}^n \times \mathbb{R}^{n\times d}\to \mathbb{R}^n$ is the generator. A solution consists of a pair of predictable processes $(Y,Z)$ with values in $\mathbb{R}^n$
and $\mathbb{R}^{n\times d}$, called the value and control process, respectively. The first existence and uniqueness result for BSDEs with an $L^2$-terminal condition and a generator satisfying a Lipschitz growth condition is due to  Pardoux and Peng~\cite{PP}. In case that the generator satisfies a quadratic growth condition in the control $z$, the situation is more involved and a general existence theory  does not exist. Frei and dos Reis \cite{FR} and Frei \cite{F} provide counterexamples which show that multidimensional quadratic BSDEs may fail to have a global solution. In the one-dimensional case the existence of quadratic BSDE
is shown by Kobylanski \cite{Ko} for bounded terminal conditions, and by Briand and Hu \cite{BH,BH1} for unbounded terminal conditions. Briand and Elie \cite{BE} provide a constructive approach to quadratic BSDEs with and without delay. Solvability results for superquadratic BSDEs are discussed in Delbaen et al.~\cite{DHB}, see also Masiero and Richou \cite{MR}, Richou \cite{R} and Cheridito and Nam  \cite{CN1}.

The focus of the present work lies on multidimensional quadratic BSDEs. In case that the terminal condition is small enough the
existence and uniqueness of a solution was first shown by Tevzadze \cite{T}. Cheridito and Nam \cite{CN} and Hu and Tang \cite{HT} obtain local solvability on $[T-\varepsilon,T]$ for some $\varepsilon>0$ of systems of BSDEs with subquadratic generators and diagonally quadratic generators respectively, which under additional assumptions on the generator can be extended to global solutions.
Cheridito and Nam \cite{CN} provide solvability for Markovian quadratic BSDEs and projectable quadratic BSDEs. Xing and \v{Z}itkovi\'{c} \cite{XZ} obtained the global solvability for a large class of multidimensional quadratic BSDEs in the Markovian setting. Frei \cite{F} introduced the notion of split solution and studied the existence of solution for multidimensional quadratic BSDEs by considering a special kind of terminal condition. Jamneshan et al.~\cite{JKL} provide solutions for multidimensional quadratic BSDEs with separated generators. Using a stability approach, Harter and Richou \cite{HR} establish an existence and uniqueness result for a class of multidimensional quadratic BSDEs. In Bahlali et al.~\cite{BEH} existence is shown when the generator $g(s,y,z)$ is strictly subquadratic in $z$ and satisfies some monotonicity condition. Multidimensional quadratic BSDEs appear in many applications, such as market making problems (see  Kramkov and Pulido \cite{KP}), nonzero-sum risk-sensitive stochastic differential games (see El Karoui and Hamad\`{e}ne \cite{EH}, Hu and Tang \cite{HT}) and non-zero sum differential games of BSDEs (see Hu and Tang \cite{HT1}).

Our results are motivated by the recent works of Hu and Tang \cite{HT} and Xing and \v{Z}itkovi\'{c} \cite{XZ}. We focus on the solvability of a type of BSDEs with triangularly quadratic generators. More precisely, we study the coupled system of quadratic BSDEs
\begin{align*}
\begin{cases}
&Y^{1}_{t}=\xi^1+\int_t^T\left[\frac{1}{2}|Z^1_s|^2+Z^1_sl^1(s,Y_s)+h^1(s,Y_s,Z_s)\right]ds-\int_t^TZ^{1}_sdW_s,\\
&Y^{i}_{t}=\xi^i+\int_t^T\left[\frac{1}{2}|Z^i_s|^2+Z^{i}_sl^i(s,Y_s,Z_s)-k^i(s,Z_s)+h^i(s,Y_s,Z_s)\right]ds\\
&\quad \quad \quad -\int_t^TZ^{i}_sdW_s,\quad,\quad i=2,\ldots,n.
\end{cases}
\end{align*}
By borrowing some techniques from Hu and Tang \cite{HT}, we first prove that this type of triangularly quadratic BSDEs admits a unique local solution on a small time interval whenever the terminal value is bounded using a contraction mapping argument. Under additional assumptions, we show that the value process is uniformly bounded. Therefore we build the global solution on the whole time interval by stitching local solutions. Finally, we give solvability results when the generators have path dependence in value process.

The paper is organized as follows. In Section 2, we state the setting and main results. Solvability results for a type of quadratic BSDEs with path dependence in value process are presented in Section 3.
\section{Preliminaries and main results}
Let $W=(W_t)_{t\geq 0}$ be a $d$-dimensional Brownian motion on a probability space $(\Omega, {\cal F}, P)$. Let $(\mathcal{F}_t)_{t\geq 0}$ be the augmented filtration generated by $W$. Throughout, we fix a $T\in (0,\infty)$. We endow $\Omega \times [0,T]$ with the predictable $\sigma$-algebra $\mathcal{P}$ and $\mathbb{R}^n$ with its Borel $\sigma$-algebra $\mathcal{B}(\mathbb{R}^n)$. Equalities and inequalities between random variables and processes are understood in the $P$-a.s. and $P\otimes dt$-a.e. sense, respectively. The Euclidean norm is denoted by $|\cdot|$ and $\|\cdot\|_\infty$ denotes the $L^\infty$-norm. $\mathcal{C}_{T}(\mathbb{R}^n)$ denotes the set $C([0,T];\mathbb{R}^n)$ of continuous functions from $[0,T]$ to $\mathbb{R}^n$. For $p>1$, we denote by
\begin{itemize}
\item $\mathcal{S}^p(\mathbb{R}^n)$ the set of $n$-dimensional continuous adapted processes $Y$ on $[0,T]$ such that
\begin{equation*}
\|Y\|_{\mathcal{S}^p}:=E\left[\sup_{0\leq t\leq T} |Y_t|^p\right]^{\frac{1}{p}}< \infty;
\end{equation*}
\item $\mathcal{S}^\infty(\mathbb{R}^n)$ the set of $n$-dimensional continuous adapted processes $Y$ on $[0,T]$ such that
\begin{equation*}
\|Y\|_{\mathcal{S}^\infty}:=\bigg\| \sup_{0\leq t\leq T} |Y_t| \bigg\|_{\infty} < \infty;
\end{equation*}
\item $\mathcal{H}^p(\mathbb{R}^{n\times d})$ the set of predictable $\mathbb{R}^{n\times d}$-valued processes $Z$ such that
    \begin{equation*}
    \|Z\|_{\mathcal{H}^p}=E\left[\left(\int_0^T|Z_s|^2ds\right)^{\frac{p}{2}}\right]^{\frac{1}{p}}<\infty.
    \end{equation*}
\end{itemize}
Let $\mathcal{T}$ be the set of all stopping times with values in $[0,T]$. For any uniformly integrable martingale $M$ with $M_0=0$ and for $p\geq 1$, we set
\begin{equation*}
\|M\|_{BMO_p(P)}:=\sup_{\tau\in\mathcal{T}}\|E[|M_T-M_\tau|^p|\mathcal{F}_\tau]^{\frac{1}{p}}\|_\infty.
\end{equation*}
The class $\{M:~\|M\|_{BMO_p}<\infty\}$ is denoted by $BMO_p$, which is written as $BMO(P)$ when it is necessary to indicate the underlying probability measure $P$. In particular, we will denote it by $BMO$ when $p=2$. For $(\alpha\cdot W)_t:=\int_0^t\alpha_sdW_s$ in $BMO$, the corresponding stochastic exponential is denoted by $\mathcal{E}_{t}(\alpha\cdot W)$. We recall classical results on $BMO$ spaces (see \cite[Theorem 3.6]{Ka} and \cite[Corollary 2.1]{Ka}).
\begin{lemma}\label{PP}
Let $a\cdot W\in BMO$ be such that $\|a\cdot W\|_{BMO}\leq \gamma$ for some $\gamma\geq 0$, and $\tilde{P}$ be given by $\frac{d\tilde{P}}{dP}:=\mathcal{E}_T(a\cdot W)$, under which $\tilde{W}=W-\int_0^{\cdot}a_sds$ is a Brownian motion. Then for every $b\cdot W\in BMO$, there exist two constants $\delta(\gamma)$ and $\Delta(\gamma)$ only depending on $\gamma$ such that
\begin{equation*}
 \delta(\gamma)\|b\cdot W\|^2_{BMO} \leq \| b\cdot \tilde{W}\|^2_{BMO(\tilde{P})} \leq \Delta(\gamma) \| b\cdot W\|^2_{BMO}.
 \end{equation*}
\end{lemma}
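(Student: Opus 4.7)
The plan is to deduce both inequalities from the reverse H\"older inequality for the Dol\'eans--Dade exponential $\mathcal{E}(a\cdot W)$. Since $\|a\cdot W\|_{BMO}\leq \gamma$, a classical theorem of Kazamaki yields an exponent $p=p(\gamma)>1$ and a constant $K(\gamma)$ such that for every $\tau\in\mathcal{T}$,
\begin{equation*}
E\!\left[\left(\frac{\mathcal{E}_T(a\cdot W)}{\mathcal{E}_\tau(a\cdot W)}\right)^p\bigg|\mathcal{F}_\tau\right]\leq K(\gamma).
\end{equation*}

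For the upper bound, I would exploit the fact that the quadratic variation of a continuous martingale is invariant under an equivalent change of measure, so $\langle b\cdot\tilde W\rangle_t=\int_0^t|b_s|^2\,ds$ under both $P$ and $\tilde P$. Using the equivalent characterisation of the BMO norm through the quadratic variation, combined with Bayes' rule, one has
\begin{equation*}
\tilde E\!\left[\int_\tau^T|b_s|^2\,ds\,\bigg|\,\mathcal{F}_\tau\right]=E\!\left[\frac{\mathcal{E}_T(a\cdot W)}{\mathcal{E}_\tau(a\cdot W)}\int_\tau^T|b_s|^2\,ds\,\bigg|\,\mathcal{F}_\tau\right].
\end{equation*}
H\"older's inequality with conjugate exponents $p,q$ then separates the density factor from the integral of $|b|^2$; the former is controlled by $K(\gamma)^{1/p}$ via the reverse H\"older bound, while the latter is dominated by $C(q)\|b\cdot W\|^2_{BMO}$ via the standard energy inequality for BMO martingales. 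Taking the essential supremum over $\tau$ yields the right-hand inequality with $\Delta(\gamma)=K(\gamma)^{1/p}C(q)$.

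For the lower bound, I would reverse the change of measure. Applying the upper bound just established with $b$ replaced by $a$ shows that $-a\cdot\tilde W$ belongs to $BMO(\tilde P)$ with norm bounded by a function of $\gamma$ alone. Since $dP/d\tilde P=\mathcal{E}_T(-a\cdot\tilde W)$, running the same reverse-H\"older/H\"older argument from $\tilde P$ back to $P$ produces a bound of the form $\|b\cdot W\|^2_{BMO}\leq \Delta'(\gamma)\|b\cdot\tilde W\|^2_{BMO(\tilde P)}$, whence $\delta(\gamma)=1/\Delta'(\gamma)$.

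The main obstacle is the first step: proving that the reverse H\"older exponent $p$ and the constant $K(\gamma)$ depend only on $\gamma$. This is exactly Kazamaki's $L^p$-integrability theorem for $\mathcal{E}(a\cdot W)$, whose proof rests on the John--Nirenberg inequality for BMO martingales together with a careful optimisation of the exponent near $1$. Once this ingredient is available, the remaining steps are routine combinations of Bayes' rule, H\"older's inequality, and the energy inequality, and the symmetry of the argument under reversal of measures takes care of the two-sided comparison.
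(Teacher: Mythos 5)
Your argument is correct. Note that the paper does not prove this lemma at all---it is quoted as a known result with a citation to Kazamaki's monograph (Theorem 3.6 there)---and what you have written is essentially a faithful reconstruction of Kazamaki's own proof: reverse H\"older for $\mathcal{E}(a\cdot W)$ with exponent and constant depending only on $\gamma$, Bayes' rule plus H\"older to get the upper bound, the energy inequality to control $E[(\int_\tau^T|b_s|^2ds)^q\,|\,\mathcal{F}_\tau]^{1/q}$ (for non-integer $q$ combine the integer-order energy inequality with conditional Jensen to get $C(q)=(\lceil q\rceil!)^{1/\lceil q\rceil}$), and the symmetric reversal via $dP/d\tilde P=\mathcal{E}_T(-a\cdot\tilde W)$, whose $BMO(\tilde P)$ norm is controlled by $\gamma$ through the upper bound already established. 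All steps close, and every constant produced depends only on $\gamma$ as required.
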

\begin{lemma}\label{EQ}
For any $p\geq 1$, there is a generic constant $L_p>0$ such that for any uniformly integrable martingale $M$,
\begin{equation*}
\|M\|^2_{BMO_p}\leq L_p\|M\|^2_{BMO_2}.
\end{equation*}
\end{lemma}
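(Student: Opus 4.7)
The plan is to distinguish two regimes. For $p \in [1, 2]$, conditional Jensen's inequality immediately gives $E[|M_T - M_\tau|^p|\mathcal{F}_\tau]^{1/p} \leq E[|M_T - M_\tau|^2|\mathcal{F}_\tau]^{1/2}$, hence $\|M\|_{BMO_p}^2 \leq \|M\|_{BMO_2}^2$ and one can take $L_p = 1$. For $p > 2$, a further application of Jensen lets us pass from $p$ to the smallest even integer $2n \geq p$, since $\|M\|_{BMO_p} \leq \|M\|_{BMO_{2n}}$, so the substance of the proof is reduced to the case $p = 2n$.

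The heart of the argument is the energy inequality
$$E\bigl[(\langle M\rangle_T - \langle M\rangle_\tau)^n \bigm| \mathcal{F}_\tau\bigr] \leq n!\, \|M\|_{BMO_2}^{2n},$$
which I would establish by induction on $n$. The base case is the identity $E[\langle M\rangle_T - \langle M\rangle_\tau|\mathcal{F}_\tau] = E[|M_T - M_\tau|^2|\mathcal{F}_\tau] \leq \|M\|_{BMO_2}^2$. For the inductive step I would start from the pathwise formula
$$(\langle M\rangle_T - \langle M\rangle_\tau)^n = n\int_\tau^T (\langle M\rangle_T - \langle M\rangle_s)^{n-1}\, d\langle M\rangle_s,$$
take $E[\cdot|\mathcal{F}_\tau]$, and use the optional projection identity to insert $E[\,\cdot\,|\mathcal{F}_s]$ inside the $d\langle M\rangle_s$ integral. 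The inductive hypothesis then bounds the integrand uniformly by $(n-1)!\,\|M\|_{BMO_2}^{2(n-1)}$, and the remaining factor $E[\langle M\rangle_T - \langle M\rangle_\tau|\mathcal{F}_\tau]$ is again controlled by $\|M\|_{BMO_2}^2$, which closes the induction.

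Combining the energy inequality with the conditional Burkholder--Davis--Gundy inequality (legitimate in our continuous-martingale setting driven by Brownian motion) yields
$$E\bigl[|M_T - M_\tau|^{2n}\bigm|\mathcal{F}_\tau\bigr] \leq C_n\, E\bigl[(\langle M\rangle_T - \langle M\rangle_\tau)^n\bigm|\mathcal{F}_\tau\bigr] \leq C_n\, n!\, \|M\|_{BMO_2}^{2n},$$
whence $\|M\|_{BMO_{2n}}^2 \leq (C_n\, n!)^{1/n} \|M\|_{BMO_2}^2$, and the lemma follows with a constant $L_p$ depending only on $p$. The main technical obstacle is the inductive energy bound: the process $(\langle M\rangle_T - \langle M\rangle_s)^{n-1}$ is not adapted in $s$, so one has to invoke the optional projection (via the tower property) to bring it inside a conditional expectation indexed by $s$. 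Once that bookkeeping is handled, the remainder is a routine application of standard martingale inequalities.
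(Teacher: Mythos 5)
You should first note that the paper does not prove this lemma at all: it is recalled as a classical fact and attributed to Kazamaki's monograph (Corollary~2.1 there), so there is no in-paper argument to compare against. Your write-up is a correct reconstruction of the standard proof of the equivalence of $BMO_p$ norms for continuous martingales. The reduction of $1\le p\le 2$ to $p=2$ and of $p>2$ to the nearest even integer $2n$ via conditional Jensen is right, and the two main ingredients --- the conditional energy inequality $E[(\langle M\rangle_T-\langle M\rangle_\tau)^n\mid\mathcal{F}_\tau]\le n!\,\|M\|_{BMO_2}^{2n}$, proved by induction using the projection theorem for integrals against the increasing adapted process $\langle M\rangle$, and the conditional Burkholder--Davis--Gundy inequality applied to $M_{(\tau+\cdot)\wedge T}-M_\tau$ --- are exactly those of the textbook treatment. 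Two points are worth making explicit. First, the lemma is stated for an arbitrary uniformly integrable martingale, while your BDG step and the identity $E[\langle M\rangle_T-\langle M\rangle_\tau\mid\mathcal{F}_\tau]=E[|M_T-M_\tau|^2\mid\mathcal{F}_\tau]$ use continuity of $M$; this is harmless here because the filtration is Brownian, so every martingale is continuous, but it should be said. Second, in the inductive step one should assume $\|M\|_{BMO_2}<\infty$ (otherwise there is nothing to prove) and, if one wants to be careful, localize to justify the interchange of conditional expectation with the $d\langle M\rangle_s$ integral; the uniform bound on the optional projection comes from the inductive hypothesis applied at the deterministic stopping time $s$. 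With these caveats the argument is complete and yields $L_p=(C_n\,n!)^{1/n}$ with $2n$ the smallest even integer at least $p$ and $C_n$ the BDG constant, which depends only on $p$ as required.
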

We consider the following function from $\mathbb{R}$ into itself defined by
\begin{equation*}
u(x)=e^{x}-1-x.
\end{equation*}
It is easy to check that $u$ has the following properties
\begin{equation*}
\begin{cases}
&u(x)\geq 0,\\
&u(x)\geq |x|-1,\\
&u''(x)-u'(x)=1.
\end{cases}
\end{equation*}
We will now focus on the solvability of the following type of BSDEs with triangularly quadratic generators:
\begin{align}\label{Eq1}
\begin{cases}
&Y^{1}_{t}=\xi^1+\int_t^T\left[\frac{1}{2}|Z^1_s|^2+Z^1_sl^1(s,Y_s)+h^1(s,Y_s,Z_s)\right]ds-\int_t^TZ^{1}_sdW_s,\\
&Y^{i}_{t}=\xi^i+\int_t^T\left[\frac{1}{2}|Z^i_s|^2+Z^{i}_sl^i(s,Y_s,Z_s)-k^i(s,Z_s)+h^i(s,Y_s,Z_s)\right]ds\\
&\quad \quad \quad -\int_t^TZ^{i}_sdW_s,\quad,\quad i=2,\ldots,n,
\end{cases}
\end{align}
where $\xi$ is $\mathbb{R}^d$-valued and $\mathcal{F}_T$-measurable random variable which is bounded. Let $C$ be a positive constant, we will make the following assumptions:
\begin{itemize}
\item[(A1)] $l^1:\Omega\times[0,T]\times\mathbb{R}^n\rightarrow\mathbb{R}$ satisfies that $l^1(\cdot,y)$ is adapted for each $y\in\mathbb{R}^n$. Moreover, it holds that
    \begin{align*}
    &|l^1(t,y)|\leq C(1+|y|),~~y\in\mathbb{R}^n;\\
    &|l^1(t,y)-l^1(t,\bar{y})|\leq C|y-\bar{y}|,~~~y,\bar{y}\in\mathbb{R}^n;
    \end{align*}
\item[(A2)]
    For $i=2,\ldots,n$, $l^i:\Omega\times[0,T]\times\mathbb{R}^n\times\mathbb{R}^{n\times d}\rightarrow\mathbb{R}$ satisfies that $l^i(\cdot,y,z)$ is adapted for each $y\in\mathbb{R}^n$ and $z\in\mathbb{R}^{n\times d}$. Moreover, it holds that
    \begin{align*}
    &|l^i(t,y,z)|\leq C(1+|y|),~~y\in\mathbb{R}^n,~z\in\mathbb{R}^{n\times d};\\
    &|l^i(t,y,z)-l^i(t,\bar{y},\bar{z})|\leq C|y-\bar{y}|+C\sum_{j=1}^{i-1}|z^j-\bar{z}^j|,~~~y,\bar{y}\in\mathbb{R}^n， ~z,\bar{z}\in\mathbb{R}^{n\times d};
    \end{align*}
\item[(A3)] For $i=2,\ldots,n$, $k^i:\Omega\times[0,T]\times\mathbb{R}^{n\times d}\rightarrow\mathbb{R}$ satisfies that $k^i(\cdot,z)$ is adapted for each $z\in\mathbb{R}^{n\times d}$. Moreover, it holds that
    \begin{align*}
    &0\leq k^i(t,z)\leq C(1+\sum_{j=1}^{i-1}|z^j|^2),~~z\in\mathbb{R}^{n\times d};\\
    &|k^i(t,z)-k^i(t,\bar{z})|\leq C\sum_{j=1}^{i-1}(1+|z^j|+|\bar{z}^j|)|z^j-\bar{z}^j|,~~z,\bar{z}\in\mathbb{R}^{n\times d};
    \end{align*}
\item[(A4)] $h:\Omega\times[0,T]\times\mathbb{R}^n\times\mathbb{R}^{n\times d}\rightarrow\mathbb{R}^n$ satisfies that $h(\cdot,y,z)$ is adapted for each $y\in\mathbb{R}^n$ and $z\in\mathbb{R}^{n\times d}$. Moreover, there exists $\alpha\in[-1,1)$ such that
    \begin{align*}
    &|h(t,y,z)|\leq C(1+|y|+|z|^{1+\alpha}),~~y\in\mathbb{R}^n,~z\in\mathbb{R}^{n\times d};\\
    &|h(t,y,z)-h(t,\bar{y},\bar{z})|\leq C|y-\bar{y}|+C\left(1+|z|^{\alpha^+}+|\bar{z}|^{\alpha^+}\right)|z-\bar{z}|,
    \end{align*}
    for $y,\bar{y}\in\mathbb{R}^n$ and $z,\bar{z}\in\mathbb{R}^{n\times d}$.
\end{itemize}
\begin{remark}
Assumptions (A2) and (A3) implies that $l^i(t,y,z)$ and $k^i(t,z)$ will only depend on the first $i-1$ components of $z$ for $i=2,\ldots,n$.
\end{remark}
Our first result is the following theorem which concerns local solutions.
\begin{theorem}\label{local}
Assume (A1)-(A4) hold, then there exist constants $T_{\eta}$, $C_1$ and $C_2$ only depending on $\alpha$, $C$ and $\|\xi\|_{\infty}$ such that for $T\leq T_{\eta}$, BSDE \eqref{Eq1} admits a unique solution $(Y,Z)$ such that $(Y,Z\cdot W)\in\mathcal{S}^{\infty}(\mathbb{R}^n)\times BMO$ with $\|Y\|_{\mathcal{S}^{\infty}}\leq C_1$ and $\|Z\cdot W\|_{BMO}\leq C_2$.
\end{theorem}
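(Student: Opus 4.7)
The plan is to prove Theorem \ref{local} by a Banach fixed point argument on a closed ball in $\mathcal{S}^\infty(\mathbb{R}^n)\times BMO$ over a sufficiently short time horizon, reducing the coupled quadratic system to a family of decoupled scalar quadratic BSDEs that can be linearized by an exponential transform and Girsanov's theorem.

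For constants $C_1,C_2>0$ to be fixed at the end, I introduce the complete metric space
\[
\mathcal{B}:=\{(y,z)\in\mathcal{S}^\infty(\mathbb{R}^n)\times\mathcal{H}^2(\mathbb{R}^{n\times d}):\,\|y\|_{\mathcal{S}^\infty}\le C_1,\ \|z\cdot W\|_{BMO}\le C_2\},
\]
endowed with the norm $\|(y,z)\|_*:=\|y\|_{\mathcal{S}^\infty}+\|z\cdot W\|_{BMO}$. Given $(y,z)\in\mathcal{B}$, I substitute $(y,z)$ for $(Y,Z)$ in every occurrence inside $l^i$, $k^i$ and $h^i$, keeping only the diagonal block $\tfrac12|Z^i|^2$ and the term $Z^i l^i(\cdots)$ as functions of the unknown $Z^i$. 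This produces, for each $i$, a scalar quadratic BSDE
\[
Y^i_t=\xi^i+\int_t^T\bigl(\tfrac12|Z^i_s|^2+Z^i_s a^i_s+b^i_s\bigr)ds-\int_t^T Z^i_s\,dW_s,
\]
where $a^i_s$ is bounded by $C(1+C_1)$ by (A1)--(A2), and $b^i_s$ has at most quadratic growth in the frozen $z_s$ by (A3)--(A4). Each scalar equation is solved by the exponential transform $P^i:=e^{Y^i}$, which cancels the $\tfrac12|Z^i|^2$ term; after a Girsanov change of measure with density $\mathcal{E}_T(a^i\cdot W)$---admissible since $\|a^i\cdot W\|_{BMO}^2\le T\|a^i\|_\infty^2$---the equation becomes linear, with Feynman--Kac representation $Y^i_t=\log E^{Q^i}\!\bigl[\exp(\xi^i+\int_t^T b^i_s\,ds)\mid\mathcal{F}_t\bigr]$, the inner expectation being finite by a John--Nirenberg estimate for BMO. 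This defines the map $\Phi(y,z):=(Y,Z)$.

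To show $\Phi(\mathcal{B})\subset\mathcal{B}$, the same exponential representation gives $\|Y\|_{\mathcal{S}^\infty}\le\|\xi\|_\infty+T\cdot\Lambda(C_1,C_2)$ for an explicit $\Lambda$, which enforces $\|Y\|_{\mathcal{S}^\infty}\le C_1$ once $C_1:=\|\xi\|_\infty+1$ and $T\le T_\eta$ are chosen. For the BMO bound on $Z^i$, I apply It\^o's formula to $u(Y^i)$; the identity $u''-u'=1$ absorbs exactly the quadratic drift, and taking a conditional expectation at an arbitrary stopping time $\tau$ yields
\[
\tfrac12 E\Bigl[\int_\tau^T|Z^i_s|^2\,ds\,\Big|\,\mathcal{F}_\tau\Bigr]\le u(\|\xi\|_\infty)+E\Bigl[\int_\tau^T u'(Y^i_s)\bigl(Z^i_s a^i_s+b^i_s\bigr)ds\,\Big|\,\mathcal{F}_\tau\Bigr].
\]
Since $u'(Y^i)$ is bounded whenever $\|Y^i\|_{\mathcal{S}^\infty}\le C_1$, Young's inequality absorbs the $Z^i a^i$ cross-term into the left-hand side at the cost of $T\|a^i\|_\infty^2$, while the $b^i$ contribution is controlled by the frozen $C_2^2$ via Lemma \ref{EQ}; choosing $C_2$ large and $T_\eta$ small then enforces $\|Z\cdot W\|_{BMO}\le C_2$.

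Finally, for the contraction, take $(y,z),(\bar y,\bar z)\in\mathcal{B}$ with images $(Y,Z),(\bar Y,\bar Z)$, and write the difference equation for $(\delta Y^i,\delta Z^i):=(Y^i-\bar Y^i,Z^i-\bar Z^i)$. Using $|Z^i|^2-|\bar Z^i|^2=(Z^i+\bar Z^i)\,\delta Z^i$ and setting $\gamma^i:=\tfrac12(Z^i+\bar Z^i)+a^i\in BMO$, Girsanov with density $\mathcal{E}_T(\gamma^i\cdot W)$ linearizes the difference equation into
\[
\delta Y^i_t=E^{\widetilde Q^i}\Bigl[\int_t^T\bigl(\bar Z^i_s(a^i_s-\bar a^i_s)+(b^i_s-\bar b^i_s)\bigr)ds\,\Big|\,\mathcal{F}_t\Bigr],
\]
and combining the Lipschitz estimates in (A1)--(A4) with Lemmas \ref{PP}--\ref{EQ} and H\"older's inequality will yield $\|\delta Y\|_{\mathcal{S}^\infty}+\|\delta Z\cdot W\|_{BMO}\le\kappa(T,C_1,C_2)\|(\delta y,\delta z)\|_*$ with $\kappa\to0$ as $T\to0$. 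The main obstacle is exactly this last estimate: the Lipschitz constant $1+|z|^{\alpha^+}+|\bar z|^{\alpha^+}$ of $h^i$ is unbounded as soon as $\alpha>0$, and the constant $1+|z^j|+|\bar z^j|$ of $k^i$ is only BMO-square-integrable, so I need a careful H\"older splitting---with exponent $2/(1+\alpha)$ for the $h^i$ term and exponent $2$ for the $k^i$ term---together with the transfer of BMO norms across $\widetilde Q^i$ furnished by Lemma \ref{PP} and the reverse H\"older property of BMO exponentials. Once $\kappa<1$ is achieved for $T\le T_\eta$ depending only on $\alpha$, $C$ and $\|\xi\|_\infty$, Banach's theorem delivers the unique solution in $\mathcal{B}$; uniqueness in the whole space $\mathcal{S}^\infty\times BMO$ follows by applying the same Girsanov linearization to the difference of any two such solutions.
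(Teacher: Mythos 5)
Your overall architecture (freeze the inputs, linearize each scalar equation by the exponential transform and Girsanov, bound $Z^i$ via It\^o's formula applied to $u(Y^i)$, then contract on a ball in $\mathcal{S}^\infty\times BMO$ for small $T$) matches the paper's, but one structural decision breaks the argument: you freeze $(y,z)$ in \emph{every} occurrence inside $l^i$, $k^i$ and $h^i$. The paper freezes $y$ everywhere and $z$ only inside $h^i$; the arguments of $l^i$ and $k^i$ are kept as the \emph{unknown} $Z$, and the resulting intermediate system is still solvable because, by (A2)--(A3), $l^i$ and $k^i$ depend only on $Z^1,\dots,Z^{i-1}$, so the scalar equations are solved recursively in $i$. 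This is not cosmetic. In your contraction step the difference $k^i(s,z_s)-k^i(s,\bar z_s)$ is controlled by $C\sum_j(1+|z^j|+|\bar z^j|)|z^j-\bar z^j|$, and after Cauchy--Schwarz and Lemma \ref{EQ} its contribution to $\|\delta Y^i\|^2_{\mathcal{S}^{\infty}}+\bar\delta\,\|\delta Z^i\cdot W\|^2_{BMO}$ is of order $(T+C_2^2)\,\|(z-\bar z)\cdot W\|^2_{BMO}$; the cross term $\bar Z^i_s\bigl(l^i(s,y_s,z_s)-l^i(s,\bar y_s,\bar z_s)\bigr)$ likewise produces a factor of order $\|\bar Z^i\cdot W\|^2_{BMO}\sim C_2^2$ with no power of $T$. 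Since $C_2$ must be at least of order $e^{\|\xi\|_\infty/2}$ for the ball to be invariant, these coefficients do \emph{not} tend to $0$ as $T\to0$, and no H\"older splitting can manufacture a power of $T$ here because the local Lipschitz constant of $k^i$ is genuinely linear in $|z|$ (unlike $h^i$, whose constant $|z|^{\alpha^+}$ with $\alpha^+<1$ yields the $T^{1-\alpha^+}$ gain you correctly anticipate). So the map you define is not a contraction for any small $T$; the difficulty you flag as "the main obstacle" is real and is not resolved by a choice of H\"older exponents.

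The paper's triangular recursion is exactly the fix: the non-small coupling constants multiply the \emph{output} differences $\|(Z^j-\bar Z^j)\cdot W\|^2_{BMO}$, $j<i$, which the lower-index estimates already bound by $\bar A\|y-\bar y\|^2_{\mathcal{S}^{\infty}}+\bar B\|(z-\bar z)\cdot W\|^2_{BMO}$ with $\bar A,\bar B\to0$ as $T\to0$; the recursion then only inflates these small constants by the fixed factor $\frac{n^2}{\bar\delta}\bigl(1+(\bar C/\bar\delta)^n\bigr)$. A secondary consequence of the same misplacement: your invariance bound $\|Y\|_{\mathcal{S}^\infty}\le\|\xi\|_\infty+T\Lambda(C_1,C_2)$ is also wrong as stated, because $\int_t^Tk^i(s,z_s)\,ds$ contributes a term of order $C_2^2$ to $\|Y^i\|_{\mathcal{S}^\infty}$ with no factor of $T$, so $C_1:=\|\xi\|_\infty+1$ cannot work; $C_1$ must dominate $\|\xi\|_\infty$ plus a constant multiple of $C_2^2$ (compare the constant $A$ in the paper). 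That part is repairable by enlarging $C_1$; the failure of the contraction is not repairable without changing the fixed-point map to respect the triangular structure.
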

\begin{proof}
The proof is divided into several steps.

\emph{Step 1.} We first show that for $(y,z\cdot W)\in\mathcal{S}^{\infty}(\mathbb{R}^n)\times BMO$, the following BSDE
\begin{equation*}
Y_s^1=\xi^1+\int_t^T\left(\frac{1}{2}|Z^1_s|^2+Z^1_sl^1(s,y_s)+h^1(s,y_s,z_s)\right)ds-\int_t^TZ^{1}_sdW_s
\end{equation*}
admits a unique solution $(Y^1,Z^1)$ such that $(Y^1,Z^1\cdot W)\in\mathcal{S}^{\infty}(\mathbb{R})\times BMO$. Indeed, noting that $\xi^1$ is bounded, $(y,z\cdot W)\in\mathcal{S}^{\infty}(\mathbb{R}^n)\times BMO$, $|l^1(t,y)|\leq C(1+|y|)$ and $|h^1(t,y,z)|\leq C(1+|y|+|z|^{1+\alpha})$, using John-Nirenberg inequality \cite[Theorem 2.2]{Ka} and Young's inequality, it is easy to check that the following BSDE
\begin{equation*}
\hat{Y}^1_s=e^{\xi^1+\int_0^Th(s,y_s,z_s)ds}+\int_t^T\hat{Z}^1_sl^1(s,y_s)ds-\int_t^T\hat{Z}^1_sdW_s
\end{equation*}
admits a unique solution $(\hat{Y}^1,\hat{Z}^1)\in\mathcal{S}^p(\mathbb{R})\times\mathcal{H}^p(\mathbb{R}^d)$ for any $p>1$. Moreover, $\hat{Y}^1>0$. Denoting $\tilde{Y}^1_t=\ln \hat{Y}^1_t$, we have
\begin{equation*}
\tilde{Y}_t^1=\left(\xi^1+\int_0^Th(s,y_s,z_s)ds\right)+\int_t^T\left[\frac{1}{2}\bigg|\frac{\hat{Z}^1_s}{\hat{Y}^1_s}\bigg|^2+\frac{\hat{Z}^1_s}{\hat{Y}^1_s}l^1(s,y_s)\right]ds-\int_t^T\frac{\hat{Z}^1_s}{\hat{Y}^1_s}dW_s.
\end{equation*}
Let $Y^1_t=\tilde{Y}^1_t-\int_0^th(s,y_s,z_s)ds$ and $Z^1_t=\frac{\hat{Z}^1_t}{\hat{Y}^1_t}$, then $(Y^1,Z^1)$ satisfies
\begin{equation*}
Y_t^1=\xi^1+\int_t^T\left(\frac{1}{2}|Z^1_s|^2+Z^1_sl^1(s,y_s)+h^1(s,y_s,z_s)\right)ds-\int_t^TZ^{1}_sdW_s.
\end{equation*}
Now we will show that $(Y^1,Z^1\cdot W)\in\mathcal{S}^{\infty}(\mathbb{R})\times BMO$. Actually, we have
\begin{equation*}
\hat{Y}^1_t=E^{Q^1}\left[e^{\xi^1+\int_0^Th(s,y_s,z_s)ds}\big|\mathcal{F}_t\right],
\end{equation*}
where $Q^1$ is the equivalent probability measure given by $\frac{dQ^1}{dP}=\mathcal{E}_T(l^1(\cdot,y_{\cdot})\cdot W)$. Therefore, it holds that
\begin{align*}
Y^1_t=\ln E^{Q^1}\left[e^{\xi^1+\int_t^Th(s,y_s,z_s)ds}\big|\mathcal{F}_t\right].
\end{align*}
Hence we have
\begin{align*}
Y^1_t&\leq \ln E^{Q^1}\left[e^{|\xi^1|+\int_t^T|h(s,y_s,z_s)|ds}\big|\mathcal{F}_t\right]\\
&\leq \ln E^{Q^1}\left[e^{|\xi^1|+\int_t^TC(1+|y_s|+|z_s|^{1+\alpha})ds}\big|\mathcal{F}_t\right]\\
&\leq \|\xi^1\|_{\infty}+C(1+\|y\|_{\infty})(T-t)+\ln E^{Q^1}\left[e^{C\int_t^T|z_s|^{1+\alpha}ds}\big|\mathcal{F}_t\right].
\end{align*}
Using Young's inequality, we obtain that
\begin{align*}
&E^{Q^1}\left[e^{C\int_t^T|z_s|^{1+\alpha}ds}\big|\mathcal{F}_t\right]\\
&\leq E^{Q^1}\left[e^{\int_t^T\frac{1+\alpha}{2}\frac{|z_s|^{2}}{(1+\alpha)\|z\cdot W\|^2_{BMO(Q^1)}}+\frac{1-\alpha}{2}C^{\frac{1+\alpha}{1-\alpha}}\left((1+\alpha)\|z\cdot W\|^2_{BMO(Q^1)}\right)^{\frac{1+\alpha}{1-\alpha}}ds}\bigg|\mathcal{F}_t\right].
\end{align*}
Applying John-Nirenberg inequality \cite[Theorem 2.2]{Ka}, it holds that
\begin{equation*}
E^{Q^1}\left[e^{\int_t^T\frac{1+\alpha}{2}\frac{|z_s|^{2}}{(1+\alpha)\|z\cdot W\|^2_{BMO(Q^1)}}ds}\bigg|\mathcal{F}_t\right]\leq 2.
\end{equation*}
Therefore, we have
\begin{equation*}
Y^1_t\leq\|\xi^1\|_{\infty}+C(1+\|y\|_{\infty})(T-t)+\frac{1-\alpha}{2}C^{\frac{1+\alpha}{1-\alpha}}((1+\alpha)\|z\cdot W\|^2_{BMO(Q^1)})^{\frac{1+\alpha}{1-\alpha}}(T-t)+\ln 2.
\end{equation*}
On the other hand, it holds that
\begin{align*}
Y^1_t&=E^{Q^1}\left[\xi^1+\int_t^T\left(\frac{1}{2}|Z^1_s|^2+h^1(s,y_s,z_s)\right)ds\bigg|\mathcal{F}_t\right]\\
&\geq E^{Q^1}\left[\xi^1+\int_t^Th^1(s,y_s,z_s)ds\bigg|\mathcal{F}_t\right]\\
&\geq E^{Q^1}\left[-|\xi^1|-C\int_t^T(1+|y_s|+|z_s|^{1+\alpha})ds\bigg|\mathcal{F}_t\right]\\
&\geq -\|\xi^1\|_{\infty}-C(1+\|y\|_{\infty})(T-t)-CE^{Q^1}\left[\int_t^T|z_s|^{1+\alpha}ds\bigg|\mathcal{F}_t\right].
\end{align*}
Using Young's inequality, we have
\begin{align}\label{Es1}
E^{Q^1}\left[\int_t^T|z_s|^{1+\alpha}ds\bigg|\mathcal{F}_t\right]
&\leq \frac{1+\alpha}{2}\frac{1}{(1+\alpha)\|z\cdot W\|^2_{BMO(Q^1)}}E^{Q^1}\left[\int_t^T|z_s|^2ds\bigg|\mathcal{F}_t\right]\nonumber\\
&\quad\quad+\frac{1-\alpha}{2}\left((1+\alpha)\|z\cdot W\|^2_{BMO(Q^1)}\right)^{\frac{1+\alpha}{1-\alpha}}(T-t)\\
&\leq \frac{1}{2}+\frac{1-\alpha}{2}\left((1+\alpha)\|z\cdot W\|^2_{BMO(Q^1)}\right)^{\frac{1+\alpha}{1-\alpha}}(T-t)\nonumber.
\end{align}
Hence, it holds that
\begin{align*}
|Y^1_t|&\leq\|\xi^1\|_{\infty}+C(1+\|y\|_{\infty})(T-t)+\ln 2+\frac{1}{2}\\
&\quad +\frac{1-\alpha}{2}(C^{\frac{1+\alpha}{1-\alpha}}+1)\left((1+\alpha)\|z\cdot W\|^2_{BMO(Q^1)}\right)^{\frac{1+\alpha}{1-\alpha}}(T-t).
\end{align*}
Applying It\^{o}'s formula to $u(Y^1_t)$, we obtain that
\begin{align*}
&u(Y^{1}_t)=u(\xi^1)-\int_t^Tu'(Y^{1}_s)Z^{1}_sdW_s\\
&+\int_t^T\left(u'(Y^{1}_s)\left(\frac{1}{2}|Z^{1}_s|^2+Z^1_sl^1(s,y_s)+h^1(s,y_s,z_s)\right)-\frac{1}{2}u''(Y^{1}_s)|Z^{1}_s|^2\right)ds\\
&=u(\xi^1)-\int_t^Tu'(Y^{1}_s)Z^{1}_sdW_s-\frac{1}{2}\int_t^T|Z^{1}_s|^2ds\\
&+\int_t^Tu'(Y^{1}_s)\left(Z^1_sl^1(s,y_s)+h^1(s,y_s,z_s)\right)ds.
\end{align*}
Therefore, we have
\begin{align*}
&E^{Q^1}\left[\frac{1}{2}\int_t^T|Z^{1}_s|^2ds\bigg|\mathcal{F}_t\right]\\
&\leq E^{Q^1}\left[u(\xi^1)+\int_t^Tu'(Y^{1}_s)h^1(s,y_s,z_s)ds\bigg|\mathcal{F}_t\right]\\
&\leq e^{\|\xi^1\|_{\infty}}+\|\xi^1\|_{\infty}+Ce^{\|\xi^1\|_{\infty}}E^{Q^1}\left[\int_t^T(1+|y_s|+|z_s|^{1+\alpha})ds\bigg|\mathcal{F}_t\right]\\
&\leq e^{\|\xi^1\|_{\infty}}+\|\xi^1\|_{\infty}+Ce^{\|\xi^1\|_{\infty}}(1+\|y\|_{\infty})(T-t)+Ce^{\|\xi^1\|_{\infty}}E^{Q^1}\left[\int_t^T|z_s|^{1+\alpha}ds\bigg|\mathcal{F}_t\right].
\end{align*}
Noting the inequality \eqref{Es1}, it holds that
\begin{align*}
E^{Q^1}\left[\frac{1}{2}\int_t^T|Z^{1}_s|^2ds|\mathcal{F}_t\right]&\leq \left(1+\frac{C}{2}\right)e^{\|\xi^1\|_{\infty}}+\|\xi^1\|_{\infty}+Ce^{\|\xi^1\|_{\infty}}(1+\|y\|_{\infty})(T-t)\\
&\quad +\frac{1-\alpha}{2}Ce^{\|\xi^1\|_{\infty}}((1+\alpha)\|z\cdot W\|^2_{BMO(Q^1)})^{\frac{1+\alpha}{1-\alpha}}(T-t).
\end{align*}
Therefore by denoting $\Delta:=\Delta(C(1+\|y\|_{\mathcal{S}^{\infty}})\sqrt{T})$ and  $\delta:=\delta(C(1+\|y\|_{\mathcal{S}^{\infty}})\sqrt{T})$ , we obtain that
\begin{equation*}
\|Y^1\|_{\mathcal{S}^{\infty}}\leq\|\xi^1\|_{\infty}+C(1+\|y\|_{\infty})T+\ln 2+\frac{1}{2}+\frac{1-\alpha}{2}(C^{\frac{1+\alpha}{1-\alpha}}+1)((1+\alpha)\Delta\|z\cdot W\|^2_{BMO})^{\frac{1+\alpha}{1-\alpha}}T
\end{equation*}
and
\begin{align*}
\|Z^{1}\cdot W\|^2_{BMO}&\leq \frac{2}{\delta}\left(\left(1+\frac{C}{2}\right)e^{\|\xi^1\|_{\infty}}+\|\xi^1\|_{\infty}+Ce^{\|\xi^1\|_{\infty}}(1+\|y\|_{\infty})T\right.\\
&\quad \left.+\frac{1-\alpha}{2}Ce^{\|\xi^1\|_{\infty}}((1+\alpha)\Delta\|z\cdot W\|^2_{BMO})^{\frac{1+\alpha}{1-\alpha}}T\right).
\end{align*}
\emph{Step 2.} Similar to the first step, it is easy to check that
\begin{equation*}
Y^{2}_{t}=\xi^2+\int_t^T\left(\frac{1}{2}|Z^2_s|^2+Z^{2}_sl^2(s,y_s,Z^1_s)-k^2(s,Z^1_s)+h^2(s,y_s,z_s)\right)dt-\int_t^TZ^{2}_sdW_s
\end{equation*}
admits a unique solution $(Y^2,Z^2)$ such that $(Y^2,Z^2\cdot W)\in\mathcal{S}^{\infty}(\mathbb{R})\times BMO$. Moreover, we have
\begin{align*}
|Y^2_t|&\leq\|\xi^2\|_{\infty}+C(2+\|y\|_{\infty})(T-t)+C\|Z^1\cdot W\|^2_{BMO(Q^2)}+\ln 2+\frac{1}{2}\\
&\quad +\frac{1-\alpha}{2}(C^{\frac{1+\alpha}{1-\alpha}}+1)((1+\alpha)\|z\cdot W\|^2_{BMO(Q^2)})^{\frac{1+\alpha}{1-\alpha}}(T-t)
\end{align*}
and
\begin{align*}
&E^{Q^2}\left[\frac{1}{2}\int_t^T|Z^{2}_s|^2ds\bigg|\mathcal{F}_t\right]\\
&\leq \left(1+\frac{C}{2}\right)e^{\|\xi^2\|_{\infty}}+\|\xi^2\|_{\infty}+Ce^{\|\xi^2\|_{\infty}}(2+\|y\|_{\infty})(T-t)\\
&\quad+\frac{1-\alpha}{2}Ce^{\|\xi^2\|_{\infty}}((1+\alpha)\|z\cdot W\|^2_{BMO(Q^2)})^{\frac{1+\alpha}{1-\alpha}}(T-t)+C\|Z^1\cdot W\|^2_{BMO(Q^2)}
\end{align*}
where $Q^2$ is the equivalent probability measure given by $\frac{dQ^2}{dP}=\mathcal{E}_T(l^2(\cdot,y_{\cdot},Z^1_{\cdot})\cdot W)$.
Therefore we obtain that
\begin{align*}
\|Y^2\|_{\mathcal{S}^{\infty}}&\leq\|\xi^2\|_{\infty}+C(2+\|y\|_{\infty})T+C\Delta\|Z^1\cdot W\|^2_{BMO}+\ln 2+\frac{1}{2}\\
&\quad +\frac{1-\alpha}{2}(C^{\frac{1+\alpha}{1-\alpha}}+1)((1+\alpha)\Delta\|z\cdot W\|^2_{BMO})^{\frac{1+\alpha}{1-\alpha}}T
\end{align*}
and
\begin{align*}
\|Z^{2}\cdot W\|^2_{BMO}&\leq\frac{2}{\delta}\left( \left(1+\frac{C}{2}\right)e^{\|\xi^2\|_{\infty}}+\|\xi^2\|_{\infty}+Ce^{\|\xi^2\|_{\infty}}(2+\|y\|_{\infty})T\right.\\
&\quad \left.+\frac{1-\alpha}{2}Ce^{\|\xi^2\|_{\infty}}((1+\alpha)\Delta\|z\cdot W\|^2_{BMO})^{\frac{1+\alpha}{1-\alpha}}T+C\Delta\|Z^1\cdot W\|^2_{BMO}\right).
\end{align*}
Recursively, we have that for $i=3,\ldots,n$, the following BSDE
\begin{equation*}
Y^{i}_{t}=\xi^i+\int_t^T\left(\frac{1}{2}|Z^i_s|^2+Z^{i}_sl^i(s,y_s,Z_s)-k^i(s,Z_s)+h^i(s,y_s,z_s)\right)dt-\int_t^TZ^{i}_sdW_s
\end{equation*}
admits a unique solution $(Y^i,Z^i)$ such that $(Y^i,Z^i\cdot W)\in\mathcal{S}^{\infty}(\mathbb{R})\times BMO$. Moreover, we have
\begin{align*}
|Y^i_t|&\leq\|\xi^i\|_{\infty}+C(2+\|y\|_{\infty})(T-t)+C\sum_{j=1}^{i-1}\|Z^j\cdot W\|^2_{BMO(Q^i)}+\ln 2+\frac{1}{2}\\
&\quad +\frac{1-\alpha}{2}(C^{\frac{1+\alpha}{1-\alpha}}+1)((1+\alpha)\|z\cdot W\|^2_{BMO(Q^i)})^{\frac{1+\alpha}{1-\alpha}}(T-t)
\end{align*}
and
\begin{align*}
&E^{Q^i}\left[\frac{1}{2}\int_t^T|Z^{i}_s|^2ds|\mathcal{F}_t\right]\\
&\leq \left(1+\frac{C}{2}\right)e^{\|\xi^i\|_{\infty}}+\|\xi^i\|_{\infty}+Ce^{\|\xi^i\|_{\infty}}(2+\|y\|_{\infty})(T-t)\\
&\quad+\frac{1-\alpha}{2}Ce^{\|\xi^i\|_{\infty}}((1+\alpha)\|z\cdot W\|^2_{BMO(Q^i)})^{\frac{1+\alpha}{1-\alpha}}(T-t)+C\sum_{j=1}^{i-1}\|Z^j\cdot W\|^2_{BMO(Q^i)}
\end{align*}
where $Q^i$ is the equivalent probability measure given by $\frac{dQ^i}{dP}=\mathcal{E}_T(l^i(\cdot,y_{\cdot},Z_{\cdot})\cdot W)$. Therefore we obtain that
\begin{align*}
\|Y^i\|_{\mathcal{S}^{\infty}}&\leq\|\xi^i\|_{\infty}+C(2+\|y\|_{\infty})T+C\Delta\sum_{j=1}^{i-1}\|Z^j\cdot W\|^2_{BMO}+\ln 2+\frac{1}{2}\\
&\quad +\frac{1-\alpha}{2}(C^{\frac{1+\alpha}{1-\alpha}}+1)((1+\alpha)\Delta\|z\cdot W\|^2_{BMO})^{\frac{1+\alpha}{1-\alpha}}T
\end{align*}
and
\begin{align*}
\|Z^{i}\cdot W\|^2_{BMO}&\leq\frac{2}{\delta}\left(\left(1+\frac{C}{2}\right)e^{\|\xi^i\|_{\infty}}+\|\xi^i\|_{\infty}+Ce^{\|\xi^i\|_{\infty}}(2+\|y\|_{\infty})T\right.\\
&\quad\left.+\frac{1-\alpha}{2}Ce^{\|\xi^i\|_{\infty}}((1+\alpha)\Delta_i\|z\cdot W\|^2_{BMO})^{\frac{1+\alpha}{1-\alpha}}T+C\Delta\sum_{j=1}^{i-1}\|Z^j\cdot W\|^2_{BMO}\right).
\end{align*}
\emph{Step 3.}
We will denote
\begin{align*}
&\Delta^*=\Delta(C),\\
&\delta^*=\delta(C),\\
&A=n\left(\|\xi\|_{\infty}+2+C+\frac{C^{\frac{1+\alpha}{1-\alpha}}}{2}+\frac{2n^3C\Delta^*}{\delta^*}\left(1+\left(\frac{2C\Delta^*}{\delta^*}\right)^{n}\right)\left(\left(1+2C\right)e^{\|\xi\|_{\infty}}+\|\xi\|_{\infty}\right)\right),\\
&B=\sqrt{\frac{2n^3}{\delta^*}\left(1+\left(\frac{2C\Delta^*}{\delta^*}\right)^{n}\right)\left(\left(1+2C\right)e^{\|\xi\|_{\infty}}+\|\xi\|_{\infty}\right)},\\
&\eta=\frac{1}{(2+A)^2}\wedge\frac{1}{(1-\alpha)((1+\alpha)\Delta^*B^2)^{\frac{1+\alpha}{1-\alpha}}}.
\end{align*}
Assuming that $T\leq \eta$, for $(y,z\cdot W)\in\mathcal{S}^{\infty}(\mathbb{R}^n)\times BMO$ such that $\|y\|_{\mathcal{S}^\infty}\leq A$ and $\|z\cdot W\|_{BMO}\leq B$, it follows from \emph{Step 1} and \emph{Step 2} that the following BSDE
\begin{align*}
\begin{cases}
&Y^{1}_{t}=\xi^1+\int_t^T\left[\frac{1}{2}|Z^1_s|^2+Z^1_sl^1(s,y_s)+h^1(s,y_s,z_s)\right]ds-\int_t^TZ^{1}_sdW_s,\\
&Y^{i}_{t}=\xi^i+\int_t^T\left[\frac{1}{2}|Z^i_s|^2+Z^{i}_sl^i(s,y_s,Z_s)-k^i(s,Z_s)+h^i(s,y_s,z_s)\right]dt\\
&\quad \quad \quad -\int_t^TZ^{i}_tdW_t,\quad,\quad i=2,\ldots,n.
\end{cases}
\end{align*}
admits a unique solution $(Y,Z)$ such that $(Y,Z\cdot W)\in\mathcal{S}^{\infty}(\mathbb{R}^n)\times BMO$. Moreover from Lemma \ref{PP}, $\Delta$ and $\delta$ can be replaced by $\Delta^*$ and $\delta^*$ respectively. Therefore, it holds that
\begin{align*}
\|Y^1\|_{\mathcal{S}^{\infty}}&\leq\|\xi^1\|_{\infty}+C(1+\|y\|_{\infty})T+\ln 2+\frac{1}{2}+\frac{1-\alpha}{2}(C^{\frac{1+\alpha}{1-\alpha}}+1)((1+\alpha)\Delta^*\|z\cdot W\|^2_{BMO})^{\frac{1+\alpha}{1-\alpha}}T\\
&\leq \|\xi^1\|_{\infty}+C(1+A)T+\ln 2+\frac{1}{2}+\frac{1-\alpha}{2}(C^{\frac{1+\alpha}{1-\alpha}}+1)((1+\alpha)\Delta^*B^2)^{\frac{1+\alpha}{1-\alpha}}T\\
&\leq \|\xi^1\|_{\infty}+2+C+\frac{C^{\frac{1+\alpha}{1-\alpha}}}{2}
\end{align*}
and
\begin{align*}
\|Z^{1}\cdot W\|^2_{BMO}&\leq \frac{2}{\delta^*}\left(\left(1+\frac{C}{2}\right)e^{\|\xi^1\|_{\infty}}+\|\xi^1\|_{\infty}+Ce^{\|\xi^1\|_{\infty}}(1+\|y\|_{\infty})T\right.\\
&\quad \left.+\frac{1-\alpha}{2}Ce^{\|\xi^1\|_{\infty}}((1+\alpha)\Delta^*\|z\cdot W\|^2_{BMO})^{\frac{1+\alpha}{1-\alpha}}T\right)\\
&\leq \frac{2}{\delta^*}\left(\left(1+\frac{C}{2}\right)e^{\|\xi^1\|_{\infty}}+\|\xi^1\|_{\infty}+Ce^{\|\xi^1\|_{\infty}}(1+A)T\right.\\
&\quad \left.+\frac{1-\alpha}{2}Ce^{\|\xi^1\|_{\infty}}((1+\alpha)\Delta^*B^2)^{\frac{1+\alpha}{1-\alpha}}T\right)\\
&\leq \frac{2}{\delta^*}\left(\left(1+2C\right)e^{\|\xi^1\|_{\infty}}+\|\xi^1\|_{\infty}\right).
\end{align*}
Similarly, we have
\begin{align*}
\|Y^i\|_{\mathcal{S}^{\infty}}&\leq\|\xi^i\|_{\infty}+C(2+\|y\|_{\infty})T+C\Delta^*\sum_{j=1}^{i-1}\|Z^j\cdot W\|^2_{BMO}+\ln 2+\frac{1}{2}\\
&\quad +\frac{1-\alpha}{2}(C^{\frac{1+\alpha}{1-\alpha}}+1)((1+\alpha)\Delta^*\|z\cdot W\|^2_{BMO})^{\frac{1+\alpha}{1-\alpha}}T\\
&\leq\|\xi^i\|_{\infty}+C(2+A)T+C\Delta^*\sum_{j=1}^{i-1}\|Z^j\cdot W\|^2_{BMO}+\ln 2+\frac{1}{2}\\
&\quad +\frac{1-\alpha}{2}(C^{\frac{1+\alpha}{1-\alpha}}+1)((1+\alpha)\Delta^*B^2)^{\frac{1+\alpha}{1-\alpha}}T\\
&\leq\|\xi^i\|_{\infty}+2+C+\frac{C^{\frac{1+\alpha}{1-\alpha}}}{2}+C\Delta^*\sum_{j=1}^{i-1}\|Z^j\cdot W\|^2_{BMO}\\
\end{align*}
and
\begin{align*}
\|Z^{i}\cdot W\|^2_{BMO}&\leq\frac{2}{\delta^*}\left(\left(1+\frac{C}{2}\right)e^{\|\xi^i\|_{\infty}}+\|\xi^i\|_{\infty}+Ce^{\|\xi^i\|_{\infty}}(2+\|y\|_{\infty})T\right.\\
&\quad\left.+\frac{1-\alpha}{2}Ce^{\|\xi^i\|_{\infty}}((1+\alpha)\Delta_i\|z\cdot W\|^2_{BMO})^{\frac{1+\alpha}{1-\alpha}}T+C\Delta^*\sum_{j=1}^{i-1}\|Z^j\cdot W\|^2_{BMO}\right)\\
&\leq\frac{2}{\delta^*}\left(\left(1+\frac{C}{2}\right)e^{\|\xi^i\|_{\infty}}+\|\xi^i\|_{\infty}+Ce^{\|\xi^i\|_{\infty}}(2+A)T\right.\\
&\quad\left.+\frac{1-\alpha}{2}Ce^{\|\xi^i\|_{\infty}}((1+\alpha)\Delta_iB^2)^{\frac{1+\alpha}{1-\alpha}}T+C\Delta^*\sum_{j=1}^{i-1}\|Z^j\cdot W\|^2_{BMO}\right)\\
&\leq\frac{2}{\delta^*}\left(\left(1+2C\right)e^{\|\xi^i\|_{\infty}}+\|\xi^i\|_{\infty}+C\Delta^*\sum_{j=1}^{i-1}\|Z^j\cdot W\|^2_{BMO}\right)
\end{align*}
Thus, we obtain recursively
\begin{align*}
\|Z^{i}\cdot W\|^2_{BMO}&\leq\frac{2i}{\delta^*}\sum_{j=1}^{i}\left(\frac{2C\Delta^*}{\delta^*}\right)^{i-j}\left(\left(1+2C\right)e^{\|\xi^i\|_{\infty}}+\|\xi^i\|_{\infty}\right)\\
&\leq\frac{2i}{\delta^*}\sum_{j=1}^{i}\left(\frac{2C\Delta^*}{\delta^*}\right)^{i-j}\left(\left(1+2C\right)e^{\|\xi\|_{\infty}}+\|\xi\|_{\infty}\right)\\
&\leq\frac{2n^2}{\delta^*}\left(1+\left(\frac{2C\Delta^*}{\delta^*}\right)^{n}\right)\left(\left(1+2C\right)e^{\|\xi\|_{\infty}}+\|\xi\|_{\infty}\right)
\end{align*}
and
\begin{align*}
\|Y^i\|_{\mathcal{S}^{\infty}}
&\leq\|\xi^i\|_{\infty}+2+C+\frac{C^{\frac{1+\alpha}{1-\alpha}}}{2}+C\Delta^*\sum_{j=1}^{i-1}\|Z^j\cdot W\|^2_{BMO}\\
&\leq \|\xi\|_{\infty}+2+C+\frac{C^{\frac{1+\alpha}{1-\alpha}}}{2}+\frac{2n^3C\Delta^*}{\delta^*}\left(1+\left(\frac{2C\Delta^*}{\delta^*}\right)^{n}\right)\left(\left(1+2C\right)e^{\|\xi\|_{\infty}}+\|\xi\|_{\infty}\right)
\end{align*}
Thus, it holds that $\|Y\|_{\mathcal{S}^{\infty}}\leq A$ and $\|Z\cdot W\|_{BMO}\leq B$.\\
\emph{Step 4.} We will denote
\begin{align*}
&\bar{\Delta}:=\Delta(\sqrt{2}C+2\sqrt{2}B)\\
&\bar{\delta}:=\delta(\sqrt{2}C+2\sqrt{2}B)\\
&\bar{A}=12C^2T(1+\bar{\Delta}B^2)\\
&\bar{B}=18C^2L^2_4\bar{\Delta}^2T^{1-\alpha^+}\left(3+2\alpha^{+} L_4\bar{\Delta}B^2\right)\\
&\bar{C}=6nC^2\bar{\Delta}^2\left(2B^2+3L^2_4\left(3+2L_4\bar{\Delta}B^2\right)\right)\\
&\bar{\eta}_1=\frac{1}{24C^2(1+\bar{\Delta}B^2)\left(n+\frac{\bar{C}n^4+n^3}{\bar{\delta}}\left(1+\left(\frac{\bar{C}}{\bar{\delta}}\right)^{n}\right)\right)}\\
&\bar{\eta}_2=\left(\frac{1}{36C^2L^2_4\bar{\Delta}^2\left(3+2\alpha^{+} L_4\bar{\Delta}B^2\right)\left(n+\frac{\bar{C}n^4+n^3}{\bar{\delta}}\left(1+\left(\frac{\bar{C}}{\bar{\delta}}\right)^{n}\right)\right)}\right)^{\frac{1}{1-\alpha^+}}.
\end{align*}
Assuming that $T\leq \eta\wedge\bar{\eta}_1\wedge\bar{\eta}_2$, for $(y,z\cdot W),(\bar{y},\bar{z}\cdot W)\in\mathcal{S}^{\infty}(\mathbb{R}^n)\times BMO$ such that $\|y\|_{\mathcal{S}^\infty}\leq A$, $\|z\cdot W\|_{BMO}\leq B$ and $\|\bar{y}\|_{\mathcal{S}^\infty}\leq A$, $\|\bar{z}\cdot W\|_{BMO}\leq B$, it follows from \emph{Step 1}, \emph{Step 2} and \emph{Step 3} that the following BSDE
\begin{align*}
\begin{cases}
&Y^{1}_{t}=\xi^1+\int_t^T\left(\frac{1}{2}|Z^1_s|^2+Z^1_sl^1(s,y_s)+h^1(s,y_s,z_s)\right)ds-\int_t^TZ^{1}_sdW_s,\\
&Y^{i}_{t}=\xi^i+\int_t^T\left(\frac{1}{2}|Z^i_s|^2+Z^{i}_sl^i(s,y_s,Z_s)-k^i(s,Z_s)+h^i(s,y_s,z_s)\right)ds\\
&\quad \quad \quad -\int_t^TZ^{i}_sdW_s,\quad,\quad i=2,\ldots,n.
\end{cases}
\end{align*}
and
\begin{align*}
\begin{cases}
&\bar{Y}^{1}_{t}=\xi^1+\int_t^T\left(\frac{1}{2}|\bar{Z}^1_s|^2+\bar{Z}^1_sl^1(s,\bar{y}_s)+h^1(s,\bar{y}_s,\bar{z}_s)\right)ds-\int_t^T\bar{Z}^{1}_sdW_s,\\
&\bar{Y}^{i}_{t}=\xi^i+\int_t^T\left(\frac{1}{2}|\bar{Z}^i_s|^2+\bar{Z}^{i}_sl^i(s,\bar{y}_s,\bar{Z}_s)-k^i(s,\bar{Z}_s)+h^i(s,\bar{y}_s,\bar{z}_s)\right)ds\\
&\quad \quad \quad -\int_t^TZ^{i}_sdW_s,\quad,\quad i=2,\ldots,n.
\end{cases}
\end{align*}
admit unique solutions $(Y,Z)$ and $(\bar{Y},\bar{Z})$ respectively such that $(Y,Z\cdot W), (\bar{Y},\bar{Z}\cdot W)\in\mathcal{S}^{\infty}(\mathbb{R}^n)\times BMO$ with $\|Y\|_{\mathcal{S}^\infty}\leq A$, $\|Z\cdot W\|_{BMO}\leq B$ and $\|\bar{Y}\|_{\mathcal{S}^\infty}\leq A$, $\|\bar{Z}\cdot W\|_{BMO}\leq B$. Then, we have
\begin{align*}
Y^1_t-\bar{Y}^1_t&=\int_{t}^T\left(\frac{1}{2}|Z^1_s|^2-\frac{1}{2}|\bar{Z}^1_s|^2+Z^1_sl^1(s,y_s)-\bar{Z}^1_sl^1(s,\bar{y}_s)+h^1(s,y_s,z_s)-h^1(s,\bar{y}_s,\bar{z}_s)\right)ds\\
&\quad\quad-\int_t^TZ^1_s-\bar{Z}^1_sdW_s\\
&=\int_{t}^T\bar{Z}^1_s\left(l^1(s,y_s)-l^1(s,\bar{y}_s)\right)+h^1(s,y_s,z_s)-h^1(s,\bar{y}_s,\bar{z}_s)ds-\int_t^TZ^1_s-\bar{Z}^1_sd\bar{W}^1_s,
\end{align*}
where
\begin{equation*}
\bar{W}^1_t=W_t-\int_{0}^t\left(l^1(s,y_s)+\frac{1}{2}(Z^1_s+\bar{Z}^1_s)\right)ds
\end{equation*}
is a Brownian motion under an equivalent probability measure $\bar{P}^1$ defined by
\begin{equation*}
\frac{d\bar{P}^1}{dP}=\mathcal{E}_{T}\left(\left(l^1(\cdot,y_{\cdot})+\frac{1}{2}(Z^1_{\cdot}+\bar{Z}^1_{\cdot})\right)\cdot W\right).
\end{equation*}
For any stopping time $\tau$ taking values in $[0,T]$, we have
\begin{align*}
&|Y^1_{\tau}-\bar{Y}^1_{\tau}|^2+\bar{E}^1\left[\int_{\tau}^T|Z^1_s-\bar{Z}^1_s|^2ds\bigg|\mathcal{F}_{\tau}\right]\\
&=\bar{E}^1\left[\left(\int_{\tau}^T\bar{Z}^1_s\left(l^1(s,y_s)-l^1(s,\bar{y}_s)\right)+h^1(s,y_s,z_s)-h^1(s,\bar{y}_s,\bar{z}_s)ds\right)^2\bigg|\mathcal{F}_{\tau}\right]\\
&\leq C^2\bar{E}^1\left[\left(\int_{\tau}^T\left(|\bar{Z}^1_s||y_s-\bar{y}_s|+|y_s-\bar{y}_s|+(1+|z_s|^{\alpha^+}+|\bar{z}_s|^{\alpha^+})|z_s-\bar{z}_s|\right)ds\right)^2\bigg|\mathcal{F}_{\tau}\right]\\
&\leq 3C^2(T-\tau)^2\|y-\bar{y}\|^2_{\mathcal{S}^{\infty}}+3C^2(T-\tau)\|y-\bar{y}\|^2_{\mathcal{S}^{\infty}}\bar{E}^1\left[\int_{\tau}^T|\bar{Z}^1_s|^2ds\bigg|\mathcal{F}_{\tau}\right]\\
&\quad+3C^2\bar{E}^1\left[\int_{\tau}^T(1+|z_s|^{\alpha^+}+|\bar{z}_s|^{\alpha^+})^2ds\int_{\tau}^T|z_s-\bar{z}_s|^2ds\bigg|\mathcal{F}_{\tau}\right]\\
&\leq 3C^2(T-\tau)^2\|y-\bar{y}\|^2_{\mathcal{S}^{\infty}}+3C^2(T-\tau)\|y-\bar{y}\|^2_{\mathcal{S}^{\infty}}\bar{E}^1\left[\int_{\tau}^T|\bar{Z}^1_s|^2ds\bigg|\mathcal{F}_{\tau}\right]\\
&\quad+9C^2\bar{E}^1\left[\left(\int_{\tau}^T(1+|z_s|^{2\alpha^+}+|\bar{z}_s|^{2\alpha^+})ds\right)^2\bigg|\mathcal{F}_{\tau}\right]^{\frac{1}{2}}\bar{E}^1\left[\left(\int_{\tau}^T|z_s-\bar{z}_s|^2ds\right)^2\bigg|\mathcal{F}_{\tau}\right]^{\frac{1}{2}}.
\end{align*}
It follows from Lemma \ref{PP} and Lemma \ref{EQ} that
\begin{equation*}
\bar{E}^1\left[\int_{\tau}^T|\bar{Z}^1_s|^2ds\bigg|\mathcal{F}_{\tau}\right]\leq \|\bar{Z}^1\cdot\bar{W}^1\|^2_{BMO(\bar{P}^1)}\leq \bar{\Delta}\|\bar{Z}^1\cdot W\|^2_{BMO}
\end{equation*}
and
\begin{align*}
\bar{E}^1\left[\left(\int_{\tau}^{T}|z_s-\bar{z}_s|^2ds\right)^2\bigg|\mathcal{F}_{\tau}\right]^{\frac{1}{2}}\leq L_4\|(z-\bar{z})\cdot \bar{W}^1\|^2_{BMO(\bar{P}^1)}\leq L_4\bar{\Delta}\|(z-\bar{z})\cdot W\|^2_{BMO}
\end{align*}
and
\begin{align*}
&\bar{E}^1\left[\left(\int_{\tau}^T(1+|z_s|^{2\alpha^+}+|\bar{z}_s|^{2\alpha^+})ds\right)^2\bigg|\mathcal{F}_{\tau}\right]^{\frac{1}{2}}\\
&\leq\bar{E}^1\left[\left(T+T^{1-\alpha^{+}}\left(\int_{\tau}^T|z_s|^2ds\right)^{\alpha^{+}}+T^{1-\alpha^{+}}\left(\int_{\tau}^T|\bar{z}_s|^2ds\right)^{\alpha^{+}}\right)^2\bigg|\mathcal{F}_{\tau}\right]^{\frac{1}{2}}\\
&\leq T^{1-\alpha^{+}}\bar{E}^1\left[\left(T^{\alpha^+}+\left(\int_{\tau}^T|z_s|^2ds\right)^{\alpha^+}+\left(\int_{\tau}^T|\bar{z}_s|^2ds\right)^{\alpha^+}\right)^2\bigg|\mathcal{F}_{\tau}\right]^{\frac{1}{2}}\\
&\leq T^{1-\alpha^+}\bar{E}^1\left[\left(T^{\alpha^+}+2-2\alpha^{+}+\alpha^{+}\int_{\tau}^T|z_s|^2ds+\alpha^{+}\int_{\tau}^T|\bar{z}_s|^2ds\right)^2\bigg|\mathcal{F}_{\tau}\right]^{\frac{1}{2}}\\
&\leq
T^{1-\alpha^+}\left(T^{\alpha^+}+2+\alpha^+\bar{E}^1\left[\left(\int_{\tau}^T|z_s|^2ds\right)^2\bigg|\mathcal{F}_{\tau}\right]^{\frac{1}{2}}+\alpha^+\bar{E}^1\left[\left(\int_{\tau}^T|\bar{z}_s|^2ds\right)^2\bigg|\mathcal{F}_{\tau}\right]^{\frac{1}{2}}\right)\\
&\leq
T^{1-\alpha^+}\left(T^{\alpha^+}+2+\alpha^+ L_4\|z\cdot W\|^2_{BMO(\bar{P}^1)}+\alpha^+ L_4\|\bar{z}\cdot W\|^2_{BMO(\bar{P}^1)}\right)\\
&\leq
T^{1-\alpha^+}\left(T^{\alpha^+}+2+\alpha^{+} L_4\bar{\Delta}\|z\cdot W\|^2_{BMO}+\alpha^{+} L_4\bar{\Delta}\|\bar{z}\cdot W\|^2_{BMO}\right).
\end{align*}
Therefore, we have
\begin{align*}
&|Y^1_{\tau}-\bar{Y}^1_{\tau}|^2+\bar{E}^1\left[\int_{\tau}^T|Z^1_s-\bar{Z}^1_s|^2|\mathcal{F}_{\tau}\right]\\
&\leq 3C^2T(T+\bar{\Delta}B^2)\|y-\bar{y}\|^2_{\mathcal{S}^{\infty}}+9C^2L^2_4\bar{\Delta}^2T^{1-\alpha^+}\left(T^{\alpha^+}+2+2\alpha^{+} L_4\bar{\Delta}B^2\right)\|(z-\bar{z})\cdot W\|^2_{BMO}.
\end{align*}
Hence
\begin{align*}
&\|Y^1-\bar{Y}^1\|^2_{\mathcal{S}^{\infty}}+\bar{\delta}\|Z^1_s-\bar{Z}^1_s\|^2_{BMO}\\
&\leq 6C^2T(T+\bar{\Delta}B^2)\|y-\bar{y}\|^2_{\mathcal{S}^{\infty}}+18C^2L^2_4\bar{\Delta}^2T^{1-\alpha^+}\left(T^{\alpha^+}+2+2\alpha^{+} L_4\bar{\Delta}B^2\right)\|(z-\bar{z})\cdot W\|^2_{BMO},\\
&\leq \bar{A}\|y-\bar{y}\|^2_{\mathcal{S}^{\infty}}+\bar{B}\|(z-\bar{z})\cdot W\|^2_{BMO}.
\end{align*}
Similarly, we could get that
\begin{align*}
&\|Y^i-\bar{Y}^i\|^2_{\mathcal{S}^{\infty}}+\bar{\delta}\|Z^i_s-\bar{Z}^i_s\|^2_{BMO}\\
&\leq 12C^2T(T+\bar{\Delta}B^2)\|y-\bar{y}\|^2_{\mathcal{S}^{\infty}}+12iC^2\bar{\Delta}^2B^2\sum_{j=1}^{i-1}\|(Z^j-\bar{Z}^j)\cdot W\|^2_{BMO}\\
&\quad +18iC^2L^2_4\bar{\Delta}^2\left(T+2+2L_4\bar{\Delta}B^2\right)\sum_{j=1}^{i-1}\|(Z^j-\bar{Z}^j)\cdot W\|^2_{BMO}\\
&\quad +18C^2L^2_4\bar{\Delta}^2T^{1-\alpha^+}\left(T^{\alpha^+}+2+2\alpha^{+} L_4\bar{\Delta}B^2\right)\|(z-\bar{z})\cdot W\|^2_{BMO}\\
&\leq \bar{A}\|y-\bar{y}\|^2_{\mathcal{S}^{\infty}}+\bar{C}\sum_{j=1}^{i-1}\|(Z^j-\bar{Z}^j)\cdot W\|^2_{BMO}+\bar{B}\|(z-\bar{z})\cdot W\|^2_{BMO}.
\end{align*}
Thus, we obtain recursively
\begin{align*}
\|(Z^i_s-\bar{Z}^i_s)\cdot W\|^2_{BMO}&\leq\frac{i}{\bar{\delta}}\sum_{j=1}^{i}\left(\frac{\bar{C}}{\bar{\delta}}\right)^{i-j}\left(\bar{A}\|y-\bar{y}\|^2_{\mathcal{S}^{\infty}}+\bar{B}\|(z-\bar{z})\cdot W\|^2_{BMO}\right)\\
&\leq\frac{n^2}{\bar{\delta}}\left(1+\left(\frac{\bar{C}}{\bar{\delta}}\right)^{n}\right)\left(\bar{A}\|y-\bar{y}\|^2_{\mathcal{S}^{\infty}}+\bar{B}\|(z-\bar{z})\cdot W\|^2_{BMO}\right)
\end{align*}
and
\begin{align*}
\|Y^i-\bar{Y}^i\|^2_{\mathcal{S}^{\infty}}&\leq\left(1+\frac{\bar{C}n^3}{\bar{\delta}}\left(1+\left(\frac{\bar{C}}{\bar{\delta}}\right)^{n}\right)\right)\left(\bar{A}\|y-\bar{y}\|^2_{\mathcal{S}^{\infty}}+\bar{B}\|(z-\bar{z})\cdot W\|^2_{BMO}\right)
\end{align*}
Therefore, we have
\begin{align*}
&\|Y-\bar{Y}\|^2_{\mathcal{S}^{\infty}}+\|(Z_s-\bar{Z}_s)\cdot W\|^2_{BMO}\\
&\leq\left(n+\frac{\bar{C}n^4+n^3}{\bar{\delta}}\left(1+\left(\frac{\bar{C}}{\bar{\delta}}\right)^{n}\right)\right)\left(\bar{A}\|y-\bar{y}\|^2_{\mathcal{S}^{\infty}}+\bar{B}\|(z-\bar{z})\cdot W\|^2_{BMO}\right)\\
&\leq \frac{1}{2}\left(\|y-\bar{y}\|^2_{\mathcal{S}^{\infty}}+\|(z-\bar{z})\cdot W\|^2_{BMO}\right),
\end{align*}
which implies BSDE \eqref{Eq1} admits a unique solution $(Y,Z)$ such that $(Y,Z\cdot W)\in\mathcal{S}^{\infty}(\mathbb{R}^n)\times BMO$ with $\|Y\|_{\mathcal{S}^{\infty}}\leq A$ and $\|Z\cdot W\|_{BMO}\leq B$.
\end{proof}

Under additional conditions, we have the follow theorem which gives us global solutions.
\begin{theorem}\label{global}
Assume (A1)-(A4) hold and additionally $h^i\leq 0$ and $|l^i|\leq C$ for $i=1,\ldots,n$, then BSDE \eqref{Eq1} admits a unique solution $(Y,Z)$ such that $(Y,Z\cdot W)\in\mathcal{S}^{\infty}(\mathbb{R}^n)\times BMO$.
\end{theorem}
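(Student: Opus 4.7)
The strategy is to produce a uniform a priori bound on any candidate solution and then stitch together local solutions supplied by Theorem \ref{local}. Concretely, I would first show that any $(Y,Z)\in\mathcal{S}^{\infty}(\mathbb{R}^{n})\times BMO$ solving \eqref{Eq1} on any subinterval $[t_{0},T]$ satisfies $\|Y\|_{\mathcal{S}^{\infty}}\leq M$ and $\|Z\cdot W\|_{BMO}\leq K$ for constants $M,K$ depending only on $\|\xi\|_{\infty}$, $C$, $\alpha$, $n$, and $T$; then choose $\eta=T_{\eta}$ from Theorem \ref{local} with $\|\xi\|_{\infty}$ replaced by $M$; and finally iterate the local construction on a partition of $[0,T]$ of mesh $\eta$.

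The upper bound on $Y^{i}$ comes free from the exponential transformation used in Step 1 of the proof of Theorem \ref{local}. That argument, applied at the fixed point, produces
\[
Y^{i}_{t}=\ln E^{Q^{i}}\!\left[\exp\!\Big(\xi^{i}+\int_{t}^{T}\!\!\big(h^{i}(s,Y_{s},Z_{s})-k^{i}(s,Z_{s})\mathbf{1}_{i\geq 2}\big)\,ds\Big)\,\Big|\,\mathcal{F}_{t}\right].
\]
Since $h^{i}\leq 0$ and $k^{i}\geq 0$ under the extra assumptions, the exponent is $\leq\xi^{i}$, which yields $Y^{i}_{t}\leq\|\xi^{i}\|_{\infty}$ uniformly in $t$.

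The lower bound on $Y^{i}$ and the $BMO$ bound on $Z^{i}$ are where the boundedness $|l^{i}|\leq C$ enters decisively: it gives $\|l^{i}\cdot W\|_{BMO(P)}\leq C\sqrt{T}$ on all of $[0,T]$, so by Lemma \ref{PP} the multiplicative constants $\bar{\Delta},\bar{\delta}$ relating $BMO$ norms under $P$ and under the Girsanov-shifted measure $Q^{i}$ are finite and depend only on $C$ and $T$, not on the subinterval. Jensen's inequality applied to the representation above, combined with the growth conditions (A3)--(A4) and Young's inequality $|z|^{1+\alpha}\leq\epsilon|z|^{2}+C_{\epsilon,\alpha}$, bounds $Y^{i}_{t}$ from below by a constant plus a controllable multiple of $\|Z\cdot W\|_{BMO}^{2}$. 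In parallel, applying It\^o's formula to $u(Y^{i})=e^{Y^{i}}-1-Y^{i}$ (as in Step 1 of the local proof) produces an estimate of the form
\[
\|Z^{i}\cdot W\|^{2}_{BMO}\leq c_{1}+c_{2}\|Y\|_{\mathcal{S}^{\infty}}+c_{3}\|Z\cdot W\|^{2}_{BMO}+c_{4}\sum_{j<i}\|Z^{j}\cdot W\|^{2}_{BMO},
\]
where the sign conditions $h^{i}\leq 0$, $k^{i}\geq 0$ ensure that the "bad" cross terms can be controlled. Picking $\epsilon$ small enough that $c_{3}$ is, after summation, strictly less than one and then solving the triangular linear system recursively in $i=1,\dots,n$ produces the required $K$, which in turn closes the lower-bound inequality to produce $M$.

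Once the uniform bound $M$ is secured, the stitching is routine: with $\eta$ depending only on $M$, $C$, $\alpha$ and $N=\lceil T/\eta\rceil$, solve \eqref{Eq1} on $[T-\eta,T]$ with terminal $\xi$ by Theorem \ref{local}, then on $[T-2\eta,T-\eta]$ with terminal $Y_{T-\eta}$ (whose $\|\cdot\|_{\infty}$ is $\leq M$ by the a priori bound), and iterate $N$ times. Concatenation delivers $(Y,Z\cdot W)\in\mathcal{S}^{\infty}(\mathbb{R}^{n})\times BMO$ on $[0,T]$, and local uniqueness from Theorem \ref{local} on each piece gives global uniqueness. The main obstacle is the third paragraph: one must carefully balance the Young parameter $\epsilon$ against the absorption coefficient $c_{3}$ and carry out the recursion in the natural triangular order so that no component blows up, while verifying that all auxiliary constants really depend only on the problem data and $T$.
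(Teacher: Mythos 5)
Your overall architecture (uniform a priori bound, then stitching local solutions from Theorem \ref{local} with a step size depending only on that bound, then pasting uniqueness) is exactly the paper's, and you correctly identify where each extra hypothesis enters: $h^i\leq 0$ and $k^i\geq 0$ give the one-sided bound $Y^i_t\leq\|\xi^i\|_\infty$ via the exponential transform, $|l^i|\leq C$ makes the Girsanov constants uniform, and the function $u(x)=e^x-1-x$ with the triangular recursion in $i$ controls the $Z^j$-terms. However, there is a genuine gap in your third paragraph, which you yourself flag as the main obstacle: the scheme you propose for closing the coupled inequalities does not close for general $T$. The linear growth of $h$ in $y$ contributes a term of order $Ce^{\|\xi\|_\infty}\int_t^T|Y_s|\,ds$ to both the lower bound on $Y^i$ and the $BMO$ estimate; bounding this by $Ce^{\|\xi\|_\infty}T\|Y\|_{\mathcal{S}^\infty}$ and trying to absorb it produces a condition of the form $C T\cdot(\text{const})<1$, i.e.\ a smallness restriction on $T$ that is not assumed. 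Likewise, the $k^i$ contribution to the lower bound of $Y^i$ enters with the fixed coefficient $C$ in front of $\sum_{j<i}\|Z^j\cdot W\|^2_{BMO(Q^i)}$, so your claim that $Y^i$ is bounded below by a constant plus a \emph{controllable} multiple of $\|Z\cdot W\|^2_{BMO}$ is not right: the Young parameter $\epsilon$ only tames the $|z|^{1+\alpha}$ term, not these two.

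The paper's resolution is a Gronwall (ODE comparison) argument that you omit. One takes suitably weighted sums of the It\^o identities for $u(Y^i)$ so that every $|Z^j|^2$ appearing through $k^i$ is dominated by the $-\tfrac12|Z^j|^2$ produced by the relation $u''-u'=1$ in the $j$-th identity (this absorption must happen at the level of the pointwise conditional-expectation inequalities, not after passing to $BMO$ norms), leaving, after $u(x)\geq|x|-1$, an inequality of the form $E[|Y_t|\,|\mathcal{F}_r]\leq a+b\int_t^TE[|Y_s|\,|\mathcal{F}_r]\,ds$ with $a,b$ depending only on the data. Gronwall then yields $|Y_t|\leq\beta_t\leq\lambda$ for the explicit backward linear ODE solution $\beta$, with no smallness condition on $T$ or $C$, and the $BMO$ bound $\|Z\cdot W\|^2_{BMO}\leq 8\lambda$ falls out of the same computation. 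Without this step (or an equivalent device), your construction of the constants $M$ and $K$ fails precisely in the regime the theorem is meant to cover, namely large $T$. A secondary, more cosmetic point: your displayed ``representation'' of $Y^i_t$ is implicit (the exponent contains $Y$ and $Z$ themselves), so it should be used only as the submartingale inequality giving the upper bound, which is how the paper uses it.
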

\begin{proof}
Let $\beta_{\cdot}$ be the unique solution of the following ordinary differential equation
\begin{align*}
\beta_t&=n+n\left(1+4n^2\left(1+\left(4Ce^{\|\xi\|_{\infty}}\right)^n\right)\right)\left(e^{\|\xi\|_{\infty}}+\|\xi\|_{\infty}+\left(C^2e^{2\|\xi\|_{\infty}}+2Ce^{\|\xi\|_{\infty}}\right)T\right)\\
&\quad
+\frac{1-\alpha}{8(1-\alpha)}\left(4(1+\alpha)n Ce^{\|\xi\|_{\infty}}\left(1+4n^2\left(1+\left(4Ce^{\|\xi\|_{\infty}}\right)^n\right)\right)\right)^{\frac{2}{1-\alpha}}T\\
&\quad +n Ce^{\|\xi\|_{\infty}}\left(1+4n^2\left(1+\left(4Ce^{\|\xi\|_{\infty}}\right)^n\right)\right)\int_t^T \beta_sds.
\end{align*}
It is easy to see that $\beta_{\cdot}$ is a continuous and decreasing function. Define
\begin{equation*}
\lambda:=\sup_{t\in[0,T]}\beta_t=\beta_0.
\end{equation*}
As $\|\xi\|_{\infty}\leq\lambda$, from Theorem \ref{local}, there exists $\eta_{\lambda}>0$ which only depends on $\lambda$ such that BSDE has a local solution $(Y,Z)$ on $[T-\eta_{\lambda},T]$.
\begin{align*}
Y^1_t&=\xi^1+\int_t^T\frac{1}{2}|Z^1_s|^2+Z^1_sl^1(s,Y_s)+h^1(s,Y_s,Z_s)ds-\int_t^TZ^1_sdW_s\\
&\leq \xi^1+\int_t^T\frac{1}{2}|Z^1_s|^2+Z^1_sl^1(s,Y_s)ds-\int_t^TZ^1_sdW_s
\end{align*}
and
\begin{align*}
Y^i_t&=\xi^i+\int_t^T\frac{1}{2}|Z^i_s|^2+Z^i_sl^i(s,Y_s,Z_s)-k^i(s,Z_s)+h^i(s,Y_s,Z_s)ds-\int_t^TZ^i_sdW_s\\
&\leq \xi^i+\int_t^T\frac{1}{2}|Z^i_s|^2+Z^i_sl^i(s,Y_s,Z_s)-\int_t^TZ^i_sdW_s
\end{align*}
Therefore, we have $Y^i_t\leq\|\xi^i\|_{\infty}$.
Applying It\^{o}'s formula to $u(Y^{1})$, we obtain that
\begin{align*}
&u(Y^{1}_t)\\
&=u(\xi^1)-\int_t^Tu'(Y^{1}_s)Z^{1}_sdW_s\\
&+\int_t^T\left(u'(Y^{1}_s)\left(\frac{1}{2}|Z^1_s|^2+Z^1_sl^1(Y_s)+h^1(s,Y_s,Z_s)\right)-\frac{1}{2}u''(Y^{1}_s)|Z^{1}_s|^2\right)ds\\
&=u(\xi^1)-\int_t^Tu'(Y^{1}_s)Z^{1}_sdW_s+\int_t^T\left(u'(Y^{1}_s)\left(Z^1_sl^1(Y_s)+h^1(s,Y_s,Z_s)\right)-\frac{1}{2}|Z^{1}_s|^2\right)ds\\
&\leq e^{\|\xi^1\|_{\infty}}+\|\xi^1\|_{\infty}-\int_t^Tu'(Y^{1}_s)Z^{1}_sdW_s\\
&\quad +\int_t^T\left(e^{\|\xi^1\|_{\infty}}\left(C|Z^1_s|+C(1+|Y_s|+|Z_s|^{1+\alpha})\right)-\frac{1}{2}|Z^{1}_s|^2\right)ds\\
&\leq e^{\|\xi^1\|_{\infty}}+\|\xi^1\|_{\infty}-\int_t^Tu'(Y^{1}_s)Z^{1}_sdW_s\\
&\quad+\int_t^T\left(\left(C^2e^{2\|\xi^1\|_{\infty}}+Ce^{\|\xi^1\|_{\infty}}(1+|Y_s|+|Z_s|^{1+\alpha})\right)-\frac{1}{4}|Z^{1}_s|^2\right)ds\\
&\leq e^{\|\xi\|_{\infty}}+\|\xi\|_{\infty}-\int_t^Tu'(Y^{1}_s)Z^{1}_sdW_s\\
&\quad+\int_t^T\left(\left(C^2e^{2\|\xi\|_{\infty}}+Ce^{\|\xi\|_{\infty}}(1+|Y_s|+|Z_s|^{1+\alpha})\right)-\frac{1}{4}|Z^{1}_s|^2\right)ds
\end{align*}
Applying It\^{o}'s formula to $u(Y^{i})$, we obtain that
\begin{align*}
&u(Y^{i}_t)\\
&=u(\xi^i)-\int_t^Tu'(Y^{i}_s)Z^{i}_sdW_s\\
&+\int_t^T\left(u'(Y^{i}_s)\left(\frac{1}{2}|Z^i_s|^2+Z^i_sl^i(Y_s,Z_s)-k^i(s,Z_s)+h^i(s,Y_s,Z_s)\right)-\frac{1}{2}u''(Y^{i}_s)|Z^{i}_s|^2\right)ds\\
&=u(\xi^i)-\int_t^Tu'(Y^{i}_s)Z^{i}_sdW_s\\
&+\int_t^T\left(u'(Y^{i}_s)\left(Z^i_sl^i(Y_s,Z_s)-k^i(s,Z_s)+h^i(s,Y_s,Z_s)\right)-\frac{1}{2}|Z^{i}_s|^2\right)ds\\
&\leq e^{\|\xi^i\|_{\infty}}+\|\xi^i\|_{\infty}-\int_t^Tu'(Y^{i}_s)Z^{i}_sdW_s\\
&+\int_t^T\left(e^{\|\xi^i\|_{\infty}}\left(C|Z^i_s|+C\left(1+\sum_{j=1}^{i-1}|Z^{j}_s|^2\right)+C(1+|Y_s|+|Z_s|^{1+\alpha})\right)-\frac{1}{2}|Z^{i}_s|^2\right)ds\\
&\leq e^{\|\xi^i\|_{\infty}}+\|\xi^i\|_{\infty}-\int_t^Tu'(Y^{i}_s)Z^{i}_sdW_s\\
&+\int_t^T\left(C^2e^{2\|\xi^i\|_{\infty}}+Ce^{\|\xi^i\|_{\infty}}\sum_{j=1}^{i-1}|Z^{j}_s|^2+Ce^{\|\xi^i\|_{\infty}}(2+|Y_s|+|Z_s|^{1+\alpha})-\frac{1}{4}|Z^{i}_s|^2\right)ds\\
&\leq e^{\|\xi\|_{\infty}}+\|\xi\|_{\infty}-\int_t^Tu'(Y^{i}_s)Z^{i}_sdW_s\\
&+\int_t^T\left(C^2e^{2\|\xi\|_{\infty}}+Ce^{\|\xi\|_{\infty}}\sum_{j=1}^{i-1}|Z^{j}_s|^2+Ce^{\|\xi\|_{\infty}}(2+|Y_s|+|Z_s|^{1+\alpha})-\frac{1}{4}|Z^{i}_s|^2\right)ds
\end{align*}
Recursively, taking conditional expectation with respect to $\mathcal{F}_r$ for $r\in[T-\eta_{\lambda},t]$, we can show that
\begin{align*}
&E[u(Y^i_t)|\mathcal{F}_r]\\
&\leq \left(1+4n^2\left(1+\left(4Ce^{\|\xi\|_{\infty}}\right)^n\right)\right)\left(e^{\|\xi\|_{\infty}}+\|\xi\|_{\infty}\right)\\
&\quad +E\left[\int_t^T\left(1+4n^2\left(1+\left(4Ce^{\|\xi\|_{\infty}}\right)^n\right)\right)\left(C^2e^{2\|\xi\|_{\infty}}+2Ce^{\|\xi\|_{\infty}}\right)ds\bigg|\mathcal{F}_r\right]\\
&\quad +E\left[\int_t^T\left(1+4n^2\left(1+\left(4Ce^{\|\xi\|_{\infty}}\right)^n\right)\right)\left(Ce^{\|\xi\|_{\infty}}(|Y_s|+|Z_s|^{1+\alpha})\right)-\frac{1}{4}|Z^{i}_s|^2ds\bigg|\mathcal{F}_r\right]
\end{align*}
Hence, it holds that
\begin{align*}
&E\left[\sum_{i=1}^{n}u(Y^i_t)\bigg|\mathcal{F}_r\right]\\
&\leq n\left(1+4n^2\left(1+\left(4Ce^{\|\xi\|_{\infty}}\right)^n\right)\right)\left(e^{\|\xi\|_{\infty}}+\|\xi\|_{\infty}\right)\\
&\quad +nE\left[\int_t^T\left(1+4n^2\left(1+\left(4Ce^{\|\xi\|_{\infty}}\right)^n\right)\right)\left(C^2e^{2\|\xi\|_{\infty}}+2Ce^{\|\xi\|_{\infty}}\right)ds\bigg|\mathcal{F}_r\right]\\
&\quad +E\left[\int_t^Tn\left(1+4n^2\left(1+\left(4Ce^{\|\xi\|_{\infty}}\right)^n\right)\right)\left(Ce^{\|\xi\|_{\infty}}(|Y_s|+|Z_s|^{1+\alpha})\right)-\frac{1}{4}|Z_s|^2ds\bigg|\mathcal{F}_r\right]\\
&\leq n\left(1+4n^2\left(1+\left(4Ce^{\|\xi\|_{\infty}}\right)^n\right)\right)\left(e^{\|\xi\|_{\infty}}+\|\xi\|_{\infty}\right)\\
&\quad +nE\left[\int_t^T\left(1+4n^2\left(1+\left(4Ce^{\|\xi\|_{\infty}}\right)^n\right)\right)\left(C^2e^{2\|\xi\|_{\infty}}+2Ce^{\|\xi\|_{\infty}}\right)ds\bigg|\mathcal{F}_r\right]\\
&\quad +E\left[\int_t^Tn Ce^{\|\xi\|_{\infty}}\left(1+4n^2\left(1+\left(4Ce^{\|\xi\|_{\infty}}\right)^n\right)\right)|Y_s|ds\bigg|\mathcal{F}_r\right]\\
&\quad
+E\left[\int_t^T\frac{1-\alpha}{8(1-\alpha)}\left(4(1+\alpha)n Ce^{\|\xi\|_{\infty}}\left(1+4n^2\left(1+\left(4Ce^{\|\xi\|_{\infty}}\right)^n\right)\right)\right)^{\frac{2}{1-\alpha}}ds\bigg|\mathcal{F}_r\right]\\
&\quad
-E\left[\int_t^T\frac{1}{8}|Z_s|^2ds\bigg|\mathcal{F}_r\right]
\end{align*}
Noting that
\begin{equation*}
u(Y^i_t)\geq |Y^i_t|-1,
\end{equation*}
we have
\begin{align*}
&E\left[|Y_t||\mathcal{F}_r\right]+\frac{1}{8}E\left[\int_t^T|Z_s|^2\bigg|\mathcal{F}_r\right]\\
&\leq n+n\left(1+4n^2\left(1+\left(4Ce^{\|\xi\|_{\infty}}\right)^n\right)\right)\left(e^{\|\xi\|_{\infty}}+\|\xi\|_{\infty}+\left(C^2e^{2\|\xi\|_{\infty}}+2Ce^{\|\xi\|_{\infty}}\right)T\right)\\
&\quad
+\frac{1-\alpha}{8(1-\alpha)}\left(4(1+\alpha)n Ce^{\|\xi\|_{\infty}}\left(1+4n^2\left(1+\left(4Ce^{\|\xi\|_{\infty}}\right)^n\right)\right)\right)^{\frac{2}{1-\alpha}}T\\
&\quad +n Ce^{\|\xi\|_{\infty}}\left(1+4n^2\left(1+\left(4Ce^{\|\xi\|_{\infty}}\right)^n\right)\right)\int_t^T E\left[|Y_s||\mathcal{F}_r\right]ds.
\end{align*}
Hence, we deduce that
\begin{equation*}
E\left[|Y_t||\mathcal{F}_r\right]\leq\beta_t.
\end{equation*}
Setting $r=t$, we have
\begin{equation*}
|Y_t|\leq\beta_t\leq\lambda.
\end{equation*}
Taking $T-\eta_{\lambda}$ as the terminal time and $Y_{T-\eta_{\lambda}}$ as terminal value, from Theorem \ref{local}, BSDE \eqref{Eq1} has a local solution $(Y,Z)$ on $[T-2\eta_{\lambda},T-\eta_{\lambda}]$. Once again, we can deduce that $|Y_t|\leq\beta_t$, for $t\in[T-2\eta_{\lambda},T-\eta_{\lambda}]$. Repeating the preceding process, we can extend the pair $(Y,Z)$ to the whole interval $[0,T]$ within a finite steps such that $Y$ is uniformly bounded by $\lambda$. Moreover, similar to the above, we have
\begin{align*}
\frac{1}{8}E\left[\int_t^T|Z_s|^2|\mathcal{F}_t\right]\leq \lambda.
\end{align*}
Consequently, we have
\begin{equation*}
\|Z\cdot W\|^2_{BMO}\leq 8\lambda.
\end{equation*}
Finally, the uniqueness on the given interval $[0,T]$ follows from Theorem \ref{local} and a pasting technique.
\end{proof}
\section{Triangularly quadratic BSDEs with path dependence in value process}

Based on the arguments in the above section, we are able to consider the following type of triangularly quadratic BSDEs with path dependence in value process:
\begin{align}\label{Eq2}
\begin{cases}
&Y^{1}_{t}=\xi^1+\int_t^T\left(\frac{1}{2}|Z^1_s|^2+Z^1_sl^1(s,G^{1}_s(Y))+h^1(s,G^{1}_s(Y),Z_s)\right)ds-\int_t^TZ^{1}_sdW_s,\\
&Y^{i}_{t}=\xi^i+\int_t^T\left(\frac{1}{2}|Z^i_s|^2+Z^{i}_sl^i(s,G^{i}_s(Y),Z_s)-k^i(s,Z_s)+h^i(s,G^{i}_s(Y),Z_s)\right)ds\\
&\quad \quad \quad -\int_t^TZ^{i}_sdW_s,\quad,\quad i=2,\ldots,n.
\end{cases}
\end{align}
where $\xi$ is $\mathbb{R}^d$-valued and $\mathcal{F}_T$-measurable random variable which is bounded and for any $0\leq t\leq T$, $G^{i}_t:\mathcal{C}_T(\mathbb{R}^n)\rightarrow \mathbb{R}^n$ is a function for $i=1,\ldots,n$. We suppose that the sequence of functions $G^i$, $i=1,\ldots,n$ satisfy the following assumption.
\begin{itemize}
\item[(A5)] For any $0\leq t\leq T$ and $y,\bar{y}\in\mathcal{C}_T(\mathbb{R}^n)$, we have $G^{i}_t(0)=0$ and
    \begin{align*}
    &G^{i}_t(y)=G^{i}_t(\{y_{s\wedge t}\}_{0\leq s\leq t\leq T}),\\
    &|G^{i}_t(y)-G^{i}_t(\bar{y})|\leq\sup_{0\leq u\leq t}|y_u-\bar{y}_u|,
    \end{align*}
\end{itemize}
\begin{remark}
Given some $\epsilon\geq 0$, typical examples satisfying assumption (A5) are
\begin{itemize}
\item[$\bullet$] the delayed value of the solution, $G^{i}_t:y\mapsto y_{(t-\epsilon)^+}$;
\item[$\bullet$] the recent maximum of the solution, $G^{i}_t:y\mapsto \sup_{(t-\epsilon)^+\leq u\leq t}y_u$;
\item[$\bullet$] the averaged recent value of the solution, $G^{i}_t:y\mapsto \frac{1}{\epsilon}\int^t_{(t-\epsilon)^+}y_udu$;
\item[$\bullet$] the cumulated recent value of the solution, $G^{i}_t:y\mapsto \int^t_{(t-\epsilon)^+}y_udu$ for $\epsilon\leq 1$ or $T\leq 1$.
\end{itemize}
\end{remark}
\begin{theorem}
Assume (A1)-(A5) hold, then there exist constants $T_{\eta}$, $C_1$ and $C_2$ only depending on $\alpha$, $C$ and $\|\xi\|_{\infty}$ such that for $T\leq T_{\eta}$, BSDE \eqref{Eq2} admits a unique solution $(Y,Z)$ such that $(Y,Z\cdot W)\in\mathcal{S}^{\infty}(\mathbb{R}^n)\times BMO$ with $\|Y\|_{\mathcal{S}^{\infty}}\leq C_1$ and $\|Z\cdot W\|_{BMO}\leq C_2$.
\end{theorem}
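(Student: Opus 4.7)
The plan is to follow the contraction mapping argument of Theorem \ref{local} essentially verbatim, with the substitution $y_s \mapsto G^i_s(y)$ inside $l^i$ and $h^i$. The idea is that assumption (A5) is strong enough to absorb the path dependence into the same a priori and Lipschitz bounds that drove Steps 1--4 of the original proof. Concretely, I would introduce the map $\Phi:\mathcal{S}^{\infty}(\mathbb{R}^n)\times BMO\to\mathcal{S}^{\infty}(\mathbb{R}^n)\times BMO$ sending $(y,z)$ to the solution $(Y,Z)$ of the decoupled system obtained from \eqref{Eq2} by freezing $G^{i}_s(y)$ in the $l^i$ and $h^i$ coefficients and freezing $z_s$ in the $h^i$ coefficient, and show that $\Phi$ has a unique fixed point in a suitable ball.

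Before invoking the estimates of Theorem \ref{local}, I would check that the frozen coefficients inherit the measurability and growth conditions in (A1)--(A4). Adaptedness of $s\mapsto G^i_s(y)$ follows from the non-anticipative property $G^{i}_t(y)=G^{i}_t(\{y_{s\wedge t}\})$ in (A5) together with continuity of $y\in\mathcal{S}^\infty$, and from $G^i_s(0)=0$ and the Lipschitz bound in (A5) we obtain $|G^i_s(y)|\leq\|y\|_{\mathcal{S}^\infty}$. Consequently, the growth estimates $|l^1(s,G^1_s(y))|\leq C(1+\|y\|_{\mathcal{S}^\infty})$, $|l^i(s,G^i_s(y),z)|\leq C(1+\|y\|_{\mathcal{S}^\infty})$ and $|h^i(s,G^i_s(y),z)|\leq C(1+\|y\|_{\mathcal{S}^\infty}+|z|^{1+\alpha})$ hold, which are exactly the bounds used in Steps 1--3 of Theorem \ref{local} (where $|y_s|$ is anyway only ever bounded by $\|y\|_{\mathcal{S}^\infty}$). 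Thus Steps 1--3 transfer unchanged and produce the same constants $A,B,\eta$, giving $\|Y\|_{\mathcal{S}^\infty}\leq A$ and $\|Z\cdot W\|_{BMO}\leq B$ whenever $T\leq\eta$ and $(y,z)$ lies in this ball.

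For the contraction step (Step 4 of Theorem \ref{local}), the relevant difference is $l^i(s,G^i_s(y),\cdot)-l^i(s,G^i_s(\bar y),\cdot)$ and similarly for $h^i$. By (A2), (A4) and (A5),
\[
|l^i(s,G^i_s(y),z)-l^i(s,G^i_s(\bar y),z)|\leq C|G^i_s(y)-G^i_s(\bar y)|\leq C\|y-\bar y\|_{\mathcal{S}^\infty},
\]
and the analogous estimate holds for $h^i$. This is \emph{pointwise in $s$} the same upper bound $C\|y-\bar y\|_{\mathcal{S}^\infty}$ that was used after applying the trivial inequality $|y_s-\bar y_s|\leq\|y-\bar y\|_{\mathcal{S}^\infty}$ in Step 4 of Theorem \ref{local}. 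Therefore the BMO/$\mathcal{S}^\infty$ computation carries over word for word: for $T\leq\eta\wedge\bar\eta_1\wedge\bar\eta_2$ one gets
\[
\|Y-\bar Y\|^2_{\mathcal{S}^\infty}+\|(Z-\bar Z)\cdot W\|^2_{BMO}\leq\tfrac{1}{2}\bigl(\|y-\bar y\|^2_{\mathcal{S}^\infty}+\|(z-\bar z)\cdot W\|^2_{BMO}\bigr),
\]
so $\Phi$ is a $1/\sqrt2$-contraction on the closed ball $\{\|y\|_{\mathcal{S}^\infty}\leq A,\ \|z\cdot W\|_{BMO}\leq B\}$, and its Banach fixed point is the desired solution with $C_1=A$, $C_2=B$, $T_\eta=\eta\wedge\bar\eta_1\wedge\bar\eta_2$.

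I do not expect a genuinely new obstacle here; the only subtle point is measurability of $s\mapsto G^i_s(y)$, which is where (A5)'s non-anticipative structure is crucial. Beyond that, the path dependence is completely absorbed by the uniform-in-$s$ Lipschitz bound $|G^i_s(y)-G^i_s(\bar y)|\leq\|y-\bar y\|_{\mathcal{S}^\infty}$, so no step in the proof of Theorem \ref{local} requires modification other than the notational substitution $y_s\rightsquigarrow G^i_s(y)$.
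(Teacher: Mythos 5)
Your proposal is correct and coincides with the paper's approach: the paper's own proof of this theorem is a one-line remark that the argument follows the technique of Theorem \ref{local}, and you have carried out exactly that transfer, correctly identifying that (A5) yields $|G^i_s(y)|\leq\|y\|_{\mathcal{S}^\infty}$ and $|G^i_s(y)-G^i_s(\bar y)|\leq\|y-\bar y\|_{\mathcal{S}^\infty}$, which are the only bounds on $y$ actually used in Steps 1--4. The adaptedness point you flag is indeed the only place where the non-anticipativity in (A5) is needed, and your treatment of it is adequate.
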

\begin{proof}
The argument follows from a similar technique as in the proof of Theorem \ref{local}.
\end{proof}
\begin{remark}
For $T\leq 1$ and $G^i_t(y)=\int_0^ty_sds$ for $i=1,\ldots,n$, BSDE \eqref{Eq2} is equivalent to the following FBSDE
\begin{align*}
\begin{cases}
&X_t=\int_0^tY_sds,\\
&Y^{1}_{t}=\xi^1+\int_t^T\left(\frac{1}{2}|Z^1_s|^2+Z^1_sl^1(s,X_s)+h^1(s,X_s,Z_s)\right)ds-\int_t^TZ^{1}_sdW_s,\\
&Y^{i}_{t}=\xi^i+\int_t^T\left(\frac{1}{2}|Z^i_s|^2+Z^{i}_sl^i(s,X_s,Z_s)-k^i(s,Z_s)+h^i(s,X_s,Z_s)\right)ds\\
&\quad \quad \quad -\int_t^TZ^{i}_sdW_s,\quad,\quad i=2,\ldots,n,
\end{cases}
\end{align*}
which in general admits a solution only for a small time horizon. We refer to \cite{KLT, LT} for related studies.
\end{remark}
For a given delay parameter $\epsilon>0$, we consider the following BSDE with delay in value process:
\begin{align}\label{Eq3}
Y^{i}_{t}=\xi^i+\int_t^T\left(\frac{1}{2}|Z^i_s|^2+Z^{i}_sl^i(s,G^{i,\epsilon}_s(Y))+h^i(s,G^{i,\epsilon}_s(Y),Z_s)\right)ds-\int_t^TZ^{i}_sdW_s,
\end{align}
where $\xi$ is $\mathbb{R}^d$-valued and $\mathcal{F}_T$-measurable random variable which is bounded and for any $0\leq t\leq T$, $G^{i,\epsilon}_t:\mathcal{C}_T(\mathbb{R}^n)\rightarrow \mathbb{R}^n$ is a function for $i=1,\ldots,n$. We suppose that the sequence of functions $G^{i,\epsilon}$, $i=1,\ldots,n$ satisfy the following assumption.
\begin{itemize}
\item[(A6)] For any $0\leq t\leq T$, $\epsilon>0$ and $y,\bar{y}\in\mathcal{C}_T(\mathbb{R}^n)$, we have $G^{i,\epsilon}_t(0)=0$ and
    \begin{align*}
    &G^{i,\epsilon}_t(y)=G^{i,\epsilon}_t(\{y_{s\wedge t}\}_{0\leq s\leq t\leq T});\\
    &|G^{i,\epsilon}_t(y)-G^{i,\epsilon}_t(\bar{y})|\leq \sup_{(t-\epsilon)^+\leq u\leq t}|y_u-\bar{y}_u|.
    \end{align*}
\end{itemize}
We also make the following assumptions:
\begin{itemize}
\item[(A7)]
    For $i=1,\ldots,n$, $l^i:\Omega\times[0,T]\times\mathbb{R}^n\rightarrow\mathbb{R}$ satisfies that $l^i(\cdot,y)$ is adapted for each $y\in\mathbb{R}^n$. Moreover, it holds that
    \begin{align*}
    &|l^i(t,y)|\leq C,~~y\in\mathbb{R}^n;\\
    &|l^i(t,y)-l^i(t,\bar{y})|\leq C|y-\bar{y}|+C\sum_{j=1}^{i-1}|z^j-\bar{z}^j|,~~~y,\bar{y}\in\mathbb{R}^n;
    \end{align*}
\item[(A8)] $h:\Omega\times[0,T]\times\mathbb{R}^n\times\mathbb{R}^{n\times d}\rightarrow\mathbb{R}^n$ satisfies that $h(\cdot,y,z)$ is adapted for each $y\in\mathbb{R}^n$ and $z\in\mathbb{R}^{n\times d}$. Moreover, there exists $\alpha\in[-1,1)$ such that
    \begin{align*}
    &-C(1+|z|^{1+\alpha})\leq h(t,y,z)\leq 0,~~y\in\mathbb{R}^n,~z\in\mathbb{R}^{n\times d};\\
    &|h(t,y,z)-h(t,\bar{y},\bar{z})|\leq C|y-\bar{y}|+C\left(1+|z|^{\alpha^+}+|\bar{z}|^{\alpha^+}\right)|z-\bar{z}|,
    \end{align*}
    for $y,\bar{y}\in\mathbb{R}^n$ and $z,\bar{z}\in\mathbb{R}^{n\times d}$.
\end{itemize}
We denote
\begin{align*}
&\bar{\beta}=n+n\left(1+4n^2\left(1+\left(4Ce^{\|\xi\|_{\infty}}\right)^n\right)\right)\left(e^{\|\xi\|_{\infty}}+\|\xi\|_{\infty}+\left(C^2e^{2\|\xi\|_{\infty}}+2Ce^{\|\xi\|_{\infty}}\right)T\right)\\
&\quad
+\frac{1-\alpha}{8(1-\alpha)}\left(4(1+\alpha)n Ce^{\|\xi\|_{\infty}}\left(1+4n^2\left(1+\left(4Ce^{\|\xi\|_{\infty}}\right)^n\right)\right)\right)^{\frac{2}{1-\alpha}}T,\\
&\tilde{\Delta}=\Delta(C\sqrt{2T}+16\sqrt{2}\bar{\beta}),\\
&\tilde{\delta}=\delta(C\sqrt{2T}+16\sqrt{2}\bar{\beta}),\\
&\epsilon_0=\frac{\tilde{\delta}}{4nC^2\left(2e\tilde{\delta}T+2e\tilde{\Delta}\tilde{\delta}\bar{\beta}+1\right)}.
\end{align*}
\begin{theorem}
Assume (A6)-(A8) hold. Then for any $0<\epsilon\leq\epsilon_0$, BSDE \eqref{Eq3} admits a unique solution $(Y, Z)$ such that $(Y,Z\cdot W)\in\mathcal{S}^{\infty}(\mathbb{R}^n)\times BMO$.
\end{theorem}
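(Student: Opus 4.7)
The plan is to combine the uniform a priori estimate of Theorem \ref{global} with a global contraction argument on the whole interval $[0,T]$, where the short memory window of size $\epsilon$ drives the contraction constant. First, I would derive the uniform bound $|Y_t|\leq\bar\beta$ together with $\|Z\cdot W\|^2_{BMO}\leq 8\bar\beta$. Since (A7)--(A8) give $|l^i|\leq C$ and $h^i\leq 0$, and since (A6) with $G^{i,\epsilon}_t(0)=0$ gives $|G^{i,\epsilon}_s(Y)|\leq\|Y\|_{\mathcal{S}^{\infty}}$, It\^o's formula applied to $u(Y^i_t)$ reproduces line by line the calculation in the proof of Theorem \ref{global}. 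The resulting integral inequality is precisely the ODE defining $\bar\beta$, and the $BMO$ control follows from the same step of that proof.

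Next I would set up the Picard map. Let
\[
\mathcal{B} := \bigl\{(y,z\cdot W)\in\mathcal{S}^{\infty}(\mathbb{R}^n)\times BMO : \|y\|_{\mathcal{S}^{\infty}}\leq\bar\beta,\ \|z\cdot W\|^2_{BMO}\leq 8\bar\beta\bigr\},
\]
and define $\Psi:\mathcal{B}\to\mathcal{B}$ by $\Psi(y,z)=(Y,Z)$, where each component $(Y^i,Z^i)$ solves the decoupled one-dimensional quadratic BSDE with the delay frozen, i.e.\ with $G^{i,\epsilon}_s(Y)$ replaced by $G^{i,\epsilon}_s(y)$. Since $|l^i|\leq C$, existence and uniqueness for each $(Y^i,Z^i)$ follows from the exponential transform argument of Step 1 of the proof of Theorem \ref{local}, and the a priori estimate above shows $\Psi(\mathcal{B})\subseteq\mathcal{B}$.

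To establish the contraction, for $(y,z),(\bar y,\bar z)\in\mathcal{B}$ with images $(Y,Z)$ and $(\bar Y,\bar Z)$, I would linearize componentwise as in Step 4 of the proof of Theorem \ref{local}. Introducing an equivalent measure $\bar P^i$ under which $\bar W^i_t=W_t-\int_0^t\bigl(l^i(s,G^{i,\epsilon}_s(y))+\tfrac{1}{2}(Z^i_s+\bar Z^i_s)\bigr)ds$ is Brownian, the $BMO$ norm of the Girsanov drift is bounded by $C\sqrt{2T}+16\sqrt{2}\bar\beta$, so Lemma \ref{PP} furnishes the constants $\tilde\Delta,\tilde\delta$ of the theorem statement. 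Conditional squared expectations of the linearized BSDE for $Y^i-\bar Y^i$, combined with Cauchy--Schwarz and the Lipschitz bounds in (A7)--(A8), should yield
\[
\|Y-\bar Y\|^2_{\mathcal{S}^{\infty}}+\|(Z-\bar Z)\cdot W\|^2_{BMO}\leq\frac{4nC^2\epsilon\,(2e\tilde\delta T+2e\tilde\Delta\tilde\delta\bar\beta+1)}{\tilde\delta}\bigl(\|y-\bar y\|^2_{\mathcal{S}^{\infty}}+\|(z-\bar z)\cdot W\|^2_{BMO}\bigr),
\]
where the factor $\epsilon$ is extracted from the windowed supremum in (A6). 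The definition of $\epsilon_0$ then forces this coefficient to be at most one, and a standard tightening of the constants yields a strict contraction, so the Banach fixed point theorem produces a unique solution on $[0,T]$.

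The chief obstacle will be producing the clean factor of $\epsilon$ in the contraction constant. The Lipschitz bound in (A6) is sup-based, so the naive estimate $|G^{i,\epsilon}_s(y)-G^{i,\epsilon}_s(\bar y)|\leq\|y-\bar y\|_{\mathcal{S}^{\infty}}$ gives a $T$-multiplier rather than an $\epsilon$-multiplier. My plan is to partition $[0,T]$ into adjacent sub-intervals of length $\epsilon$ and iterate the linearized estimate across them, using the John--Nirenberg exponential integrability of $\bar Z^i\cdot\bar W^i$ (see Theorem 2.2 in \cite{Ka}) to absorb cross-window contributions into the $BMO$ norm of $(z-\bar z)\cdot W$; the resulting telescoping sum should carry the $\epsilon$ factor. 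The remainder of the argument is a routine adaptation of the computations already developed in Theorems \ref{local} and \ref{global}.
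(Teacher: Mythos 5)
Your overall architecture matches the paper's: a priori bounds $\|Y\|_{\mathcal{S}^\infty}\leq\bar\beta$ and $\|Z\cdot W\|^2_{BMO}\leq 8\bar\beta$ for the system with the delay frozen (obtained from Theorem \ref{global}, whose hypotheses the frozen system satisfies under (A7)--(A8)), followed by a Picard iteration in the frozen argument. But the step you yourself flag as the ``chief obstacle'' --- extracting smallness from the window of length $\epsilon$ --- is exactly the step your proposal leaves unresolved, and your proposed substitute does not supply it. The paper's mechanism is an exponentially weighted norm: it applies It\^o's formula to $e^{\gamma t}|Y^i_t-\bar Y^i_t|^2$ and exploits that for $r\in[(s-\epsilon)^+,s]$ one has $e^{\gamma s}\leq e^{\gamma\epsilon}e^{\gamma r}$, whence
\begin{equation*}
e^{\gamma s}|G^{i,\epsilon}_s(y)-G^{i,\epsilon}_s(\bar y)|^2\leq e^{\gamma\epsilon}\,\big\|e^{\gamma\cdot}|y-\bar y|^2\big\|_{\mathcal{S}^{\infty}}.
\end{equation*}
Taking $\gamma=1/\epsilon$ makes $e^{\gamma\epsilon}=e$ a harmless constant, while the term $-\gamma\int_t^Te^{\gamma s}|Y^i_s-\bar Y^i_s|^2ds$ generated by It\^o's formula absorbs the entire coefficient $8nC^2Te^{\gamma\epsilon}+4n\tilde\Delta C^2/\tilde\delta+8nC^2\tilde\Delta\bar\beta e^{\gamma\epsilon}$ multiplying $\int_t^Te^{\gamma s}|Y^i_s-\bar Y^i_s|^2ds$ precisely when $\epsilon\leq\epsilon_0$; the final constant $\tfrac12$ then comes from a Young splitting with weights $\tfrac{1}{64n\tilde\Delta\bar\beta e^{\gamma\epsilon}}$, $\tfrac{1}{8nTe^{\gamma\epsilon}}$ and $\tfrac{\tilde\delta}{4n\tilde\Delta}$, not from a factor proportional to $\epsilon$. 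So the contraction inequality you conjecture, with constant $\propto\epsilon$, is not what the structure delivers, and the role of the threshold $\epsilon_0$ is misidentified: it is the condition $\gamma=1/\epsilon\geq 1/\epsilon_0$ that makes the weight dominate, not a smallness of the Lipschitz constant.

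Your fallback plan --- partitioning $[0,T]$ into windows of length $\epsilon$ and telescoping --- does not work: at time $s\in[T-\epsilon,T]$ the generator sees $y_r$ for $r\in[(s-\epsilon)^+,s]$, which reaches into the previous window, while the backward dynamics propagate terminal information from $T$ down to $0$, so no sub-interval can be solved independently of the others; iterating a crude estimate across the $\lceil T/\epsilon\rceil$ windows compounds constants that are not individually small, and John--Nirenberg integrability of $\bar Z^i$ gives no mechanism for a telescoping cancellation. Two smaller inaccuracies: with only the delay frozen the system remains coupled through $h^i(s,\cdot,Z_s)$, so it is not a family of decoupled scalar equations, and Step 1 of Theorem \ref{local} would only produce it on a short horizon --- one needs Theorem \ref{global} (as the paper uses) to solve the frozen system on all of $[0,T]$; moreover the paper's map freezes only $y$ and contracts only in $\|e^{\gamma\cdot}|y-\bar y|^2\|_{\mathcal{S}^\infty}$, the $z$-argument never being an input of the iteration.
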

\begin{proof}
For $y\in\mathcal{S}^{\infty}(\mathbb{R}^n)$, if follows from Theorem \ref{global} that the following BSDE
\begin{align*}
Y^{i}_{t}=\xi^i+\int_t^T\left(\frac{1}{2}|Z^i_s|^2+Z^{i}_sl^i(s,G^{i,\epsilon}_s(y))+h^i(s,G^{i,\epsilon}_s(y),Z_s)\right)ds-\int_t^TZ^{i}_sdW_s
\end{align*}
admits a unique solution $(Y,Z)$ such that $(Y,Z\cdot W)\in\mathcal{S}^{\infty}(\mathbb{R}^n)\times BMO$ and
\begin{equation*}
\|Y\|_{\mathcal{S}^\infty}\leq\bar{\beta},~~~~\|Z\cdot W\|^2_{BMO}\leq 8\bar{\beta}.
\end{equation*}
Let us introduce the function $\varphi:\mathcal{S}^{\infty}(\mathbb{R}^n)\mapsto\mathcal{S}^{\infty}(\mathbb{R}^n)$ such that for any $y\in\mathcal{S}^{\infty}(\mathbb{R}^n)$, $\phi(y)=Y$ is the first component to the following BSDE
\begin{align*}
Y^{i}_{t}=\xi^i+\int_t^T\left(\frac{1}{2}|Z^i_s|^2+Z^{i}_sl^i(s,G^{i,\epsilon}_s(y))+h^i(s,G^{i,\epsilon}_s(y),Z_s)\right)ds-\int_t^TZ^{i}_sdW_s
\end{align*}
We consider $(y,\bar{y})\in\mathcal{S}^{\infty}(\mathbb{R}^n)\times\mathcal{S}^{\infty}(\mathbb{R}^n)$ and denote by $(Y,Z)$ and $(\bar{Y},\bar{Z})$ the solutions to the associated BSDEs. From the above, we have
\begin{equation*}
\|Z\cdot W\|^2_{BMO}\leq 8\bar{\beta},~~~\|\bar{Z}\cdot W\|^2_{BMO}\leq 8\bar{\beta}.
\end{equation*}
Moreover, applying It\^{o}'s formula to $e^{\gamma t}|Y^i_t-\bar{Y}^i_t|^2$, we have
\begin{align*}
&e^{\gamma t}|Y^i_t-\bar{Y}^i_t|^2+\int_t^Te^{\gamma s}|Z^i_s-\bar{Z}^i_s|^2ds\\
&=2\int_t^Te^{\gamma s}\left(Y^i_s-\bar{Y}^i_s\right)\left(\frac{1}{2}|Z^i_s|^2-\frac{1}{2}|\bar{Z}^i_s|^2+Z^i_sl^i(s,G^{i,\epsilon}_s(y))-\bar{Z}^i_sl^i(s,G^{i,\epsilon}_s(\bar{y}))\right)ds\\
&\quad +2\int_t^Te^{\gamma s}\left(Y^i_s-\bar{Y}^i_s\right)\left(h^i(s,G^{i,\epsilon}_s(y),Z_s)-h^i(s,G^{i,\epsilon}_s(\bar{y}),\bar{Z}_s)\right)ds\\
&\quad -\gamma\int_t^Te^{\gamma s}|Y^i_s-\bar{Y}^i_s|^2ds-2\int_t^Te^{\gamma s}\left(Y^i_s-\bar{Y}^i_s\right)\left(Z^i_s-\bar{Z}^i_s\right)dW_s\\
&=2\int_t^Te^{\gamma s}\left(Y^i_s-\bar{Y}^i_s\right)\bar{Z}^i_s\left(l^i(s,G^{i,\epsilon}_s(y))-l^i(s,G^{i,\epsilon}_s(\bar{y}))\right)ds\\
&\quad+2\int_t^Te^{\gamma s}\left(Y^i_s-\bar{Y}^i_s\right)\left(h^i(s,G^{i,\epsilon}_s(y),Z_s)-h^i(s,G^{i,\epsilon}_s(\bar{y}),\bar{Z}_s)\right)ds\\
&\quad -\gamma\int_t^Te^{\gamma s}|Y^i_s-\bar{Y}^i_s|^2ds-2\int_t^Te^{\gamma s}\left(Y^i_s-\bar{Y}^i_s\right)\left(Z^i_s-\bar{Z}^i_s\right)d\bar{W}^i_s,
\end{align*}
where $\bar{W}^i_t=W_t-\int_{0}^t\left(\frac{1}{2}(Z^i_s+\bar{Z}^i_s)+l^i(s,G^{i,\epsilon}_s(y))\right)ds$ is a Brownian motion under the equivalent probability measure $\bar{P}^i$ defined by
\begin{equation*}
\frac{d\bar{P}^i}{dP}=\mathcal{E}_T\left(\left(\frac{1}{2}\left(Z^i_{\cdot}+\bar{Z}^i_{\cdot}\right)+l^i(\cdot,G^{i,\epsilon}_{\cdot}(y))\right)\cdot W\right).
\end{equation*}
Therefore, we have
\begin{align*}
&e^{\gamma t}|Y^i_t-\bar{Y}^i_t|^2+\int_t^Te^{\gamma s}|Z^i_s-\bar{Z}^i_s|^2ds\\
&\leq2C\int_t^Te^{\gamma s}|Y^i_s-\bar{Y}^i_s||\bar{Z}^i_s||G^{i,\epsilon}_s(y)-G^{i,\epsilon}_s(\bar{y})|ds\\
&\quad+2C\int_t^Te^{\gamma s}|Y^i_s-\bar{Y}^i_s|\left(|G^{i,\epsilon}_s(y)-G^{i,\epsilon}_s(\bar{y})|+|Z_s-\bar{Z}_s|\right)ds\\
&\quad -\gamma\int_t^Te^{\gamma s}|Y^i_s-\bar{Y}^i_s|^2ds-2\int_t^Te^{\gamma s}\left(Y^i_s-\bar{Y}^i_s\right)\left(Z^i_s-\bar{Z}^i_s\right)d\bar{W}^i_s\\
&\leq \left(8nC^2Te^{\gamma\epsilon}+\frac{4n\tilde{\Delta}C^2}{\tilde{\delta}}+8nC^2\tilde{\Delta}\bar{\beta}e^{\gamma\epsilon}\right)\int_t^Te^{\gamma s}|Y^i_s-\bar{Y}^i_s|^2ds-\gamma \int_t^Te^{\gamma s}|Y^i_s-\bar{Y}^i_s|^2ds\\
&\quad+\frac{1}{64n\tilde{\Delta}\bar{\beta}e^{\gamma\epsilon}}\int_t^Te^{\gamma s}|\bar{Z}^i_s|^2|G^{i,\epsilon}_s(y)-G^{i,\epsilon}_s(\bar{y})|^2ds+\frac{1}{8nTe^{\gamma\epsilon}}\int_t^Te^{\gamma s}|G^{i,\delta}_s(y)-G^{i,\delta}_s(\bar{y})|^2ds\\
&\quad+\frac{\tilde{\delta}}{4n\tilde{\Delta}}\int_t^Te^{\gamma s}|Z_s-\bar{Z}_s|^2ds-2\int_t^Te^{\gamma s}\left(Y^i_s-\bar{Y}^i_s\right)\left(Z^i_s-\bar{Z}^i_s\right)d\bar{W}^i_s.
\end{align*}
Since $\epsilon\leq\epsilon_0$, by letting $\lambda=\frac{1}{\epsilon}$, we have
\begin{align*}
&e^{\gamma t}|Y^i_t-\bar{Y}^i_t|^2+\int_t^Te^{\gamma s}|Z^i_s-\bar{Z}^i_s|^2ds\\
&\leq\frac{1}{64n\tilde{\Delta}\bar{\beta}e^{\gamma\epsilon}}\int_t^Te^{\gamma s}|\bar{Z}^i_s|^2|G^{i,\epsilon}_s(y)-G^{i,\epsilon}_s(\bar{y})|^2ds+\frac{1}{8nTe^{\gamma\epsilon}}\int_t^Te^{\gamma s}|G^{i,\delta}_s(y)-G^{i,\delta}_s(\bar{y})|^2ds\\
&\quad+\frac{\tilde{\delta}}{4n\tilde{\Delta}}\int_t^Te^{\gamma s}|Z_s-\bar{Z}_s|^2ds-2\int_t^Te^{\gamma s}\left(Y^i_s-\bar{Y}^i_s\right)\left(Z^i_s-\bar{Z}^i_s\right)d\bar{W}^i_s.
\end{align*}
Noting that for $\epsilon>0$, we have
\begin{align*}
e^{\gamma s}|G^{i,\epsilon}_s(y)-G^{i,\epsilon}_s(\bar{y})|^2&\leq e^{\gamma s}\sup_{(s-\epsilon)^+\leq r\leq s}|y_r-\bar{y}_r|^2\\
&\leq e^{\gamma \epsilon}\sup_{(s-\epsilon)^+\leq r\leq s}e^{\gamma r}|y_r-\bar{y}_r|^2ds\\
&\leq e^{\gamma \epsilon}\|e^{\gamma_{\cdot}}|y-\bar{y}|^2\|_{\mathcal{S}^{\infty}}.
\end{align*}
Hence we have
\begin{align*}
&e^{\gamma t}|Y^i_t-\bar{Y}^i_t|^2+\int_t^Te^{\gamma s}|Z^i_s-\bar{Z}^i_s|^2ds\\
&\leq\frac{1}{64n\tilde{\Delta}\bar{\beta}}\|e^{\gamma_{\cdot}}|y-\bar{y}|^2\|_{\mathcal{S}^{\infty}}\int_t^T|\bar{Z}^i_s|^2ds+\frac{1}{8n}\|e^{\gamma_{\cdot}}|y-\bar{y}|^2\|_{\mathcal{S}^{\infty}}\\
&\quad+\frac{\tilde{\delta}}{4n\tilde{\Delta}}\int_t^Te^{\gamma s}|Z_s-\bar{Z}_s|^2ds-2\int_t^Te^{\gamma s}\left(Y^i_s-\bar{Y}^i_s\right)\left(Z^i_s-\bar{Z}^i_s\right)d\bar{W}^i_s.
\end{align*}
Therefore, it follows from Lemma \ref{PP} that
\begin{align*}
&\|e^{\gamma \cdot}|Y^i-\bar{Y}^i|^2\|_{\mathcal{S}^{\infty}}+\tilde{\delta}\|e^{\frac{\gamma \cdot}{2}}(Z^i-\bar{Z}^i)\cdot W\|^2_{BMO}\\
&\leq\frac{1}{32n\tilde{\Delta}\bar{\beta}}\|e^{\gamma_{\cdot}}|y-\bar{y}|^2\|_{\mathcal{S}^{\infty}}\tilde{\Delta}\|\bar{Z}^i\cdot W\|^2_{BMO}+\frac{1}{4n}\|e^{\gamma_{\cdot}}|y-\bar{y}|^2\|_{\mathcal{S}^{\infty}}\\
&\quad+\frac{\tilde{\delta}}{2n\tilde{\Delta}}\tilde{\Delta}\|e^{\frac{\gamma \cdot}{2}}(Z-\bar{Z})\cdot W\|^2_{BMO}\\
&\leq\frac{1}{2n}\|e^{\gamma_{\cdot}}|y-\bar{y}|^2\|_{\mathcal{S}^{\infty}}+\frac{\tilde{\delta}}{2n}\|e^{\frac{\gamma \cdot}{2}}(Z-\bar{Z})\cdot W\|^2_{BMO}.
\end{align*}
Finally, we deduce that
\begin{align*}
\|e^{\gamma_{\cdot}}|Y-\bar{Y}|^2\|_{\mathcal{S}^{\infty}}&\leq \frac{1}{2}\|e^{\gamma_{\cdot}}|y-\bar{y}|^2\|_{\mathcal{S}^{\infty}}.
\end{align*}
Thus, $\varphi$ is a contraction on $\mathcal{S}^{\infty}(\mathbb{R}^n)$ so that there exists a unique solution $(Y,Z)$ to BSDE \eqref{Eq3} such that $(Y,Z\cdot W)\in\mathcal{S}^{\infty}(\mathbb{R}^n)\times BMO$.
\end{proof}                                                                                           %


\begin{thebibliography}{99}                                                                                               %
\bibitem{B} J.M. Bismut. Conjugate convex functions in optimal stochastic control. \emph{J. Math. Anal. Appl.} \textbf{44}, (1973), 384--404.
\bibitem{BEH} K. Bahlali, E. H. Essaky, M. Hassani. Multidimensional BSDEs with super-linear growth coefficient: Application to degenerate systems of semilinear PDEs. \emph{C.R. Acad. Sci. Paris S\'er. I Math.} \textbf{348}, (2010), 677--682.
\bibitem{BE} P. Briand and R. Elie. A simple constructive approach to quadratic BSDEs with or without delay. \emph{Stochastic Processes and their Applications}. \textbf{123(8)}, 2013, 2921-2939.
\bibitem{BH} P. Briand and Y. Hu. BSDE with quadratic growth and unbounded terminal value. \emph{Probability Theory and Related Fields.} \textbf{136}, (2006), 604--618.
\bibitem{BH1} P. Briand and Y. Hu. Quadratic BSDEs with convex generators and unbounded terminal conditions. \emph{Probability Theory and Related Fields.} \textbf{141}, (2008), 543--567.
\bibitem{CN1} P. Cheridito and K. Nam. BSDEs with terminal conditions that have bounded Malliavin derivative. \emph{Journal of Functional Analysis} \textbf{266}, (2014), 1257--1285.
\bibitem{CN} P. Cheridito and K. Nam. Multidimensional quadratic and subquadratic BSDEs with special structure. \emph{Stochastics} \textbf{87}, (2015), 871--884.
\bibitem{CM} B. Chikvinidze and M. Mania. New proofs of some results on bounded mean oscillation martingales
using Backward stochastic differential equations. \emph{J. Theor. Probab.} \textbf{27}, (2014), 1213--1228.
\bibitem{DHB} F. Delbaen, Y. Hu, X. Bao. Backward SDEs with superquadratic growth. \emph{Probability Theory and Related Fields.} \textbf{150}, (2011), 145--192.
\bibitem{EH} N. El Karoui and S. Hamad\`{e}ne. BSDEs and risk-sensitive control, zero-sum and non-zero sum game problems of stochastic functional differential equations. \emph{Stochastic Process. Appl.} \textbf{107}, (2003), 145--169.
\bibitem{F} C. Frei. Splitting multidimensional BSDEs and finding local equilibria. \emph{Stochastic Process. Appl.} \textbf{124}, (2014), 2654--2671.
\bibitem{FR} C. Frei and dos Reis. A financial market with interacting investors: does an equilibrium exist? \emph{Math. Financ. Econ.} \textbf{4}, (2011), 161--182.
\bibitem{HR} J. Harter and A. Richou. A stability approach for solving multidimensional quadaratic BSDEs. \emph{Electron. J. Probab.} \textbf{24(4)}, 2019, 1-51.
\bibitem{HT} Y. Hu and S. Tang. Multi-dimensional backward stochastic differential equations of diagonally quadratic generators. \emph{Stochastic Process. Appl.} \textbf{126}, (2016), 1066--1086.
\bibitem{HT1} Y. Hu and S. Tang. Non-zero sum quadratic differential game of BSDEs and multi-dimensional diagonally quadratic BSDE. \emph{IFAC-PapersOnline} \textbf{49}, (2016), 308--309.
\bibitem{JKL} A. Jamneshan, M. Kupper and P. Luo. Multidimensional quadratic BSDEs with separated generators. \emph{Electronic Communications in Probability}. \textbf{22(58)}, 2017, 1-10.
\bibitem{KXZ} C. Kardaras, H. Xing and G. \v{Z}itkovi\'{c}. Incomplete stochastic equilibria with exponential utilities close to Pareto optimality, arXiv:1505.07224v1
\bibitem{Ka} N. Kazamaki. Continuous Exponential Martingale and BMO. Lecture Notes in Mathematics, vol. 1579. \emph{Springer-Verlag}, Berlin, 1994. viii+91 pp.
\bibitem{Ko} M. Kobylanski. Backward stochastic differential equations and partial differential equations with quadratic growth. \emph{Annals of Probability.} \textbf{28}, (2000), 558--602.
\bibitem{KP} D. Kramkov and S. Pulido. A system of quadratic BSDEs arising in a price impact model. \emph{Ann. Appl. Probab.} \textbf{26}, (2016), 794--817.
\bibitem{KLT} M. Kupper, P. Luo and L. Tangpi. Multidimensional Markovian FBSDEs with super-quadratic growth. \emph{Stochastic Processes and their Applications}. \textbf{129(3)}, 2019, 902-923.
\bibitem{LT} P. Luo and L. Tangpi. Solvability of coupled FBSDEs with diagonally quadratic generators. \emph{Stochastics and Dynamics}. \textbf{17(6)}, (2017), 1750043.
\bibitem{MR} F. Masiero and A. Richou. A note on the existence of solutions to Markovian superquadratic BSDEs with an unbounded ternimal condtion. \emph{Electron. J. Probab} \textbf{18}, (2013), 1-15.
\bibitem{PP} E.Pardoux and S. G. Peng. Adapted solution of a backward stochastic differential equation. \emph{System Control Lett.} \textbf{14}, (1990), 55--61.
\bibitem{R} A. Richou. Markovian quadratic and superquadratic BSDEs with an unbounded terminal condition. \emph{Stochastic Process. Appl.} \textbf{122}, (2012), 3173--3208.
\bibitem{T} R. Tevzadze. Solvability of backward stochastic differential equations with quadratic growth. \emph{Stochastic Process. Appl.} \textbf{118}, (2008), 503--515.
\bibitem{NT} N. Touzi. Optimal Stochastic Control, Stochastic Target Problems, and Backward SDE. Fields Institute Monographs, vol. 29. \emph{Springer}, New York, 2013. x+214 pp.
\bibitem{XZ} H. Xing and G. \v{Z}itkovi\'{c}. A class of globally solvable Markovian quadratic BSDE systems and applications. \emph{Ann. Probab.} \textbf{46(1)}, 2018, 491-550.
\end{thebibliography}
\end{document}